\documentclass[a4paper,11pt]{amsart}
\usepackage{cancel}

\DeclareFontFamily{U}{mathx}{}
\DeclareFontShape{U}{mathx}{m}{n}{<-> mathx10}{}
\DeclareSymbolFont{mathx}{U}{mathx}{m}{n}
\DeclareMathAccent{\widehat}{0}{mathx}{"70}
\DeclareMathAccent{\widecheck}{0}{mathx}{"71}
\DeclareMathAccent{\widetilde}{0}{mathx}{"72}

\usepackage[square,numbers]{natbib}

\usepackage{amsfonts,amsmath,amssymb,amsthm}
\usepackage{mathtools}
\usepackage{xargs,xifthen}
\usepackage{xcolor}
\usepackage{graphics}
\usepackage{subcaption}
\usepackage{enumitem}
\usepackage[hidelinks]{hyperref}
\usepackage[noabbrev]{cleveref}
\usepackage{tikz}
\usepackage{pgfplots}
\pgfplotsset{compat=1.18}
\usepgfplotslibrary[groupplots]
\usepackage{tikzscale}

\usepackage{comment}

\newtheorem{theorem}{Theorem}

\newtheorem{corollary}[theorem]{Corollary}

\theoremstyle{definition}

\newtheorem{example}[theorem]{Example}
\theoremstyle{remark}
\newtheorem{remark}[theorem]{Remark}

\newtheorem{assumption}[theorem]{Assumption}
\numberwithin{theorem}{section}
\numberwithin{equation}{section}
\numberwithin{table}{section}
\numberwithin{figure}{section}

\newcommand{\wrt}{with respect to}

\newcommand{\naturalnumbers}{\mathbb{N}}

\newcommand{\Domain}{\Omega}

\DeclareMathOperator{\identity}{id}

\DeclareMathOperator{\divergence}{div}

\DeclareMathOperator{\curl}{curl}
\DeclareMathOperator{\kernel}{ker}
\DeclareMathOperator{\spann}{span}
\DeclareMathOperator*{\argmin}{argmin}

\DeclareMathOperator{\supp}{supp}
\DeclareMathOperator{\interior}{int}

\newcommand{\diffd}{\,\mathrm{d}}
\newcommand{\dx}{\diffd x}

\newcommandx{\partialt}[1][1=]{\partial_t^{#1}}

\newcommand{\MeshFont}[1]{\mathcal{#1}}
\newcommand{\coarse}[1]{{#1}_{H}}

\newcommand{\Mesh}{\MeshFont{T}}

\newcommand{\Element}{K}
\newcommand{\varElement}{G}

\newcommand{\finalTime}{T}
\newcommandx{\Nb}[2][1=,2=\Element]{\mathtt{N}^{#1}(#2)}

\newcommandx{\timetheta}[1][1=n]{t_{#1}^{\theta}}

\newcommandx{\ContSpace}[1][1=\Domain]{C(#1)}
\newcommandx{\SmoothSpace}[2][2=\Domain]{C^{#1}(#2)}
\newcommandx{\Ltwo}[1][1=\Domain]{L^2(#1)}
\newcommandx{\Lebesgue}[2][2=\Domain]{L^{#1}(#2)}
\newcommandx{\Hone}[1][1=\Domain]{H^1(#1)}
\newcommandx{\Hloc}[1][1=\Domain]{H^1_0(#1)}
\newcommandx{\Hdiv}[1][1=\Domain]{H(\divergence;#1)}
\newcommandx{\Hcurl}[1][1=\Domain]{H(\curl;#1)}
\newcommandx{\Sobolev}[3][3=\Domain]{W^{#1,#2}(#3)}
\newcommandx{\Sobolevtwo}[2][2=\Domain]{H^{#1}(#2)}

\DeclarePairedDelimiterX{\norm}[1]{\|}{\|}{#1}
\DeclarePairedDelimiterX{\seminorm}[1]{|}{|}{#1}
\DeclarePairedDelimiterX{\abs}[1]{|}{|}{#1}
\NewDocumentCommand{\Norm}{O{} m m}{\norm[#1]{#2}_{#3}}
\NewDocumentCommand{\SemiNorm}{O{} m m}{\seminorm[#1]{#2}_{#3}}
\NewDocumentCommand{\LtwoNorm}{O{} m O{\Domain}}{\norm[#1]{#2}_{\Ltwo[{#3}]}}
\NewDocumentCommand{\HlocNorm}{O{} m O{\Domain}}{\norm[#1]{\nabla #2}_{\Ltwo[{#3}]}}

\DeclarePairedDelimiterX{\bilinear}[2]{(}{)}{#1,#2}
\DeclarePairedDelimiterX{\dualpair}[2]{\langle}{\rangle}{#1,#2}
\NewDocumentCommand{\IP}{O{} m m m}{\bilinear[#1]{#2}{#3}_{#4}}
\NewDocumentCommand{\LtwoIP}{O{} m m O{\Domain}}{\IP[#1]{#2}{#3}{\Ltwo[#4]}}
\NewDocumentCommand{\HlocIP}{O{} m m}{\IP[#1]{\nabla #2}{\nabla #3}{\Ltwo}}
\NewDocumentCommand{\energyIP}{O{} m m}{a\bilinear[#1]{#2}{#3}}
\NewDocumentCommand{\restrictedenergyIP}{O{} m m m}{a\vert_{#2}\bilinear[#1]{#3}{#4}}
\NewDocumentCommand{\stabilizationIP}{O{} m m}{s\bilinear[#1]{#2}{#3}}
\NewDocumentCommand{\DP}{O{} m m m}{\dualpair[#1]{#2}{#3}_{#4}}

\newcommand{\BulkSolution}{u}
\newcommand{\msBulkSolution}{\tilde{u}_H}

\newcommand{\PDEcoefficient}{A}
\newcommand{\RHS}{f}

\newcommand{\ConstantInit}{C_{\mathrm{init}}}

\newcommand{\OperatorFont}[1]{\mathcal{#1}}
\newcommand{\LtwoProjection}{\Pi}

\newcommandx{\locCorrector}[1][1=\ell]{\OperatorFont{C}^{[#1]}}

\newcommandx{\locElementCorrector}[1][1=\ell]{\OperatorFont{C}_{\Element}^{[#1]}}

\newcommandx{\locOMCorrector}[1][1=\ell]{\OperatorFont{R}^{[#1]}}

\newcommandx{\locEnrichedCorrector}[1][1=\mathrm{loc}]{\OperatorFont{D}^{#1}}
\newcommandx{\ElementEnrichedCorrector}[1][1=\varElement]{\OperatorFont{D}_{#1}}
\newcommandx{\locElementEnrichedCorrector}[2][1=\varElement,2=\lambda]{\OperatorFont{D}_{#1}^{[#2]}}

\newcommandx{\locmsProjection}[1][1=\ell]{\widetilde{\OperatorFont{P}}^{[#1]}}
\newcommandx{\EnrichedCorrectionMapping}[1][1=j]{\widetilde{\OperatorFont{Q}}^{#1}}
\newcommandx{\EnrichedMSMapping}[1][1=j]{\widecheck{\OperatorFont{Q}}^{#1}}
\newcommandx{\locEnrichedCorrectionMapping}[1][1=j]{\widetilde{\OperatorFont{Q}}^{#1,\mathrm{loc}}}
\newcommandx{\locEnrichedMSMapping}[1][1=j]{\widecheck{\OperatorFont{Q}}^{#1,\mathrm{loc}}}
\newcommandx{\enrichedmsRitzProjection}[1][1=j]{\widecheck{\OperatorFont{P}}^{#1}}
\newcommandx{\locenrichedmsRitzProjection}[1][1=j]{\widecheck{\OperatorFont{P}}^{\mathrm{#1,loc}}}
\newcommandx{\enrichedRitzProjection}[1][1=j]{\widehat{\OperatorFont{P}}^{#1}}
\newcommandx{\locenrichedRitzProjection}[1][1=j]{\widehat{\OperatorFont{P}}^{\mathrm{#1,loc}}}

\newcommand{\BulkSpace}{V}

\newcommand{\kernelSpace}{W}
\newcommandx{\enrichedCorrectionSpace}[1][1=j]{\coarse{\widehat{\kernelSpace}}^{#1}}
\newcommandx{\locenrichedCorrectionSpace}[1][1=j]{\coarse{\widehat{\kernelSpace}}^{#1,\mathrm{loc}}}
\newcommandx{\enrichedmsBulkSpace}[1][1=j]{\coarse{\widecheck{\BulkSpace}}^{#1}}
\newcommandx{\locenrichedmsBulkSpace}[1][1=j]{\coarse{\widecheck{\BulkSpace}}^{#1,\mathrm{loc}}}

\newcommand{\BulkTestFunction}{v}

\newcommand{\wt}[1]{\widetilde{\mathbf{#1}}}

\usepackage{bm}

\usepackage{scalerel}
\newcommand{\invdag}{\scalerel*{\rotatebox[origin=c]{180}{$\dagger$}}{\dagger}}
\definecolor{greeen}{RGB}{0,127,0}

\begin{document}

\title[Enriched higher-order multiscale methods for waves]{Enriched higher-order multiscale approaches with applications to  wave propagation}
\author[B.~Kalyanaraman, F.~Krumbiegel, R.~Maier, S.~Wang]{Balaje~Kalyanaraman$^\ddagger$$^\mathsection$, Felix~Krumbiegel$^{\invdag}$, Roland~Maier$^\dagger$, and Siyang~Wang$^\ddagger$}
\address{${}^{\ddagger}$ Department of Mathematics and Mathematical Statistics, Ume\aa{} University,  901 87 Ume\aa, Sweden.}
\email{\{balaje.kalyanaraman,siyang.wang\}@umu.se}
\address{${}^{\mathsection}$ Department of Computing Science, Ume\aa{} University,  901 87 Ume\aa, Sweden.}
\address{${}^{\dagger}$ Institute for Applied and Numerical Mathematics, Karlsruhe Institute of Technology, Englerstr.~2, 76131 Karlsruhe, Germany.}
\email{roland.maier@kit.edu}
\address{${}^{\invdag}$ Department of Mathematics, Saarland University, Campus E 1.1, 66123 Saarbrücken, Germany.}
\email{felix.krumbiegel@uni-saarland.de}
\date{\today}
%
%
\begin{abstract}
  We consider the numerical solution of partial differential equations  with coefficients that are strongly heterogeneous in space. We provide an overview of higher-order localized orthogonal decomposition (LOD) methods for the elliptic setting, including recent advancements, and then present a generalization of the strategy to linear hyperbolic multiscale problems. We address the limitations of earlier constructions for the wave equation, which only achieve second-order convergence in space, independent of the chosen polynomial degree. Building on the methodology of enriched corrections recently developed for parabolic multiscale problems, we motivate and propose an enriched higher-order LOD method for the wave equation. The enriched corrections exhibit exponential decay and can be computed on patches. Under minimal assumptions on the coefficient and standard well-preparedness conditions on the data, we derive a priori error estimates that achieve optimal high-order convergence rates, thereby overcoming the previously observed saturation of the convergence rate. With the fifth-order Rosenbrock-Wanner (ROW) time integrator, we conduct a series of numerical examples to verify our theoretical results. We provide examples showing the optimal spatial convergence of the method including the localization errors for different polynomial orders. We also present examples showing the optimal convergence rates of the time discretization.
\end{abstract}

\maketitle

{\tiny {\bf Keywords.} second-order hyperbolic PDE, wave equation, multiscale method, localized orthogonal decomposition, higher-order approach}\\
\indent
{\tiny {\bf AMS subject classification.} 65M12, 65M15, 65M60, 35L20}



\section{Introduction}

Wave propagation in strongly heterogeneous materials arises in many applications, such as composite materials, geophysical subsurface modeling, and medical imaging. In these settings, the underlying media exhibit fine-scale heterogeneities that are modeled by highly varying coefficients in the wave equation.  Standard numerical techniques require the discretization scale to be fine enough to resolve these fine-scale properties, leading to prohibitively high computational cost. 

Many multiscale methods, addressing the issue of highly varying coefficients in the wave equation, have been developed in the last years, see, e.g.,~\cite{Jiang2010,Abdulle2011,Engquist2011,Owhadi2017,Abdulle2017,MaiP19,MaierVerfurth2022,GeeM23,Kru26}. Among these approaches, the localized orthogonal decomposition (LOD) method has proved to be effective for solving partial differential equations with rough $L^\infty$-coefficients. The method was originally developed for elliptic problems \cite{MalP14,HenP13}, see also the monograph~\cite{MalP20} and the review article~\cite{Altmann2021}. In its original formulation, the LOD method is based on standard  first-order continuous finite elements on a coarse mesh, which are adjusted with appropriate localized fine-scale corrections.
Based on the ideas of the LOD and gamblets~\cite{Owh17,OwhS19}, a higher-order LOD method for elliptic problems was proposed in~\cite{Mai21} and later improved in~\cite{DonHM23,HauLM25}. Its main idea is to extract convergence rates from regularity of the right-hand side only, while requiring minimal regularity assumptions on the coefficient and the solution. This is possible by an appropriate construction of a coarse-scale space, which has favorable orthogonalization properties and can be computed by solving a set of localized sub-problems. However, as the construction is tailored to elliptic problems, it is not reliable to obtain a higher-order convergence behavior if used for the spatial discretization in the context of the wave equation, see~\cite{KruM25}. More precisely, higher-order spatial regularity of the solution is required to achieve higher-order convergence rates, which cannot be expected for problems with rough $L^\infty$-coefficients. In the very general setting, the convergence rate in space does not exceed order two. 

Recently, we have introduced an enriched higher-order LOD method for parabolic problems. It demonstrates that higher-order convergence rates can be obtained by enriching the original higher-order LOD space with additional, problem-adapted corrections~\cite{KalKMW26}. The enriched corrections exploit higher temporal regularity of the solution -- which is reasonable under appropriate assumptions on the initial conditions -- instead of higher-order spatial regularity. In addition, the enriched corrections are proved to have an exponential decay property that allows the calculation of the enriched corrections on the same patch as the higher-order LOD basis, thus preserving the computational structure and scalability of the LOD framework. 

The goal of this work is twofold. First, we aim at providing a review of different constructions of higher-order LOD methods. Second, we aim at generalizing the enrichment methodology to the wave equation to overcome the previously observed saturation of the convergence rate in higher-order LOD discretizations. Under minimal assumptions on the coefficient and standard well-preparedness of the data, we prove optimal higher-order convergence rates in the energy norm with respect to the spatial discretization. 
For higher order accuracy in time, we employ a suitable higher-order time stepping method. Several choices exist in the literature, for example, the backward differentiation formulas~(BDF)~\cite{shampine1997matlab, gear1971simultaneous}, the fully-implicit Runge-Kutta methods~(FIRK)~\cite{hairer1999stiff, AdaptiveRadauPaper}, or the Rosenbrock-Wanner~(ROW) methods~\cite{rosenbrock1963some,steinebach2023construction}. Other non-stiff choices are also popular for second-order differential equations, like the Runge-Kutta-Nystr\"om methods~\cite{fehlberg1969klassische, MONTIJANO2024115533}. In this work, we use the Rosenbrock-Wanner methods, in particular, the 5th-order accurate L-stable \texttt{Rodas5P} scheme.

The rest of the paper is organized as follows. In Section~\ref{sec:2}, we present the model problem and provide an overview of higher-order LOD methods. In particular, we present the overall methodology and focus on different possible localization approaches. In Section~\ref{sec:3}, we present an enriched higher-order LOD method for the wave equation, which enriches the multiscale space from the elliptic setting with appropriate additional localized corrections. We prove optimal convergence rates in the energy norm for an ideal (non-localized) construction and motivate its localization. Numerical examples are presented in Section~\ref{sec:4} to verify the theoretical results. Finally, we draw conclusions in Section~\ref{sec:5}.

\subsubsection*{Notation.} We write~$a\lesssim b$ if~$a\leq Cb$, where~$C$ is a generic constant that is independent of the mesh sizes~$H$ and $h$, the localization parameter~$\ell$, and the fine-scale parameter~$\varepsilon$. The constant may depend on the domain~$\Omega$, the regularity of the mesh~$\mathcal{T}_H$, the polynomial degree~$p$, and the dimension~$d$. If it depends on the final time~$T$, we write $a \lesssim_T b$. Further, we use~$a\sim b$ if~$a\lesssim b \lesssim a$. 

\section{Higher-order localized orthogonal decomposition methods}\label{sec:2}

In this section, we present a higher-order multiscale method that constructs problem-adapted approximation spaces based on the general ideas of the LOD~\cite{MalP20} and gamblets~\cite{OwhS19}. We first introduce an \emph{ideal} approach in the elliptic setting and review different strategies to make it practically feasible. We then discuss the applicability of the strategy to the acoustic wave equation. As it turns out, the higher-order rates are not necessarily obtained in such a setting. Adjustments to retain the desired rates will be introduced in the subsequent section.

\subsection{Prototypical construction}
The localized orthogonal decomposition method was first introduced in the context of elliptic model problems by M\aa{}lqvist and Peterseim~\cite{MalP14} and later improved by Henning and Peterseim in~\cite{HenP13}. In a prototypical setting, the method decomposes the solution space into a coarse space with favorable approximation properties and a remainder space that is the orthogonal complement of the coarse space \wrt{} the elliptic bilinear form
\begin{equation}
  a(u,v) = (A\nabla u, \nabla v)_{\Ltwo}
\end{equation}
for~$u,v\in\Hloc$. Here, $\Omega \subset \mathbb{R}^d$ is an open and bounded Lipschitz domain in some dimension $d$ and $A\in L^\infty(\Omega)$ with~$0<\alpha\leq A(x)\leq \beta<\infty$ for almost every~$x\in\Domain$. Note that matrix-valued coefficients could be considered as well, but we stick to scalar-valued choices to simplify the presentation. %
Implicitly, we have in mind coefficients~$A$ that are heterogeneous on one or multiple scales and denote with~$\varepsilon$ the finest such scale. However, the existence of an explicit scale~$\varepsilon$ is not necessary for the construction. 
The general goal is to construct the coarse space such that the $a$-orthogonal projection of any $v \in \Hloc$ preserves certain coarse-scale properties of $v$ (such as, e.g., its $L^2$-projection onto a coarse space). This is beneficial to obtain favorable approximation properties of the coarse space. 
While the classical LOD~\cite{MalP14,HenP13,MalP20} considers a first-order continuous polynomial space as the foundation, the approach has been extended to higher-order discontinuous polynomial spaces in~\cite{Mai21,DonHM23} and to higher-order continuous finite element spaces in~\cite{HauLM25}. The original higher-order construction in~\cite{Mai21} provides the basis for all these constructions and is presented in the following. 

We start from a discontinuous higher-order polynomial space on a quasi-uniform regular quadrilateral mesh~$\coarse{\Mesh}$, i.e., 
\begin{equation}
  \coarse{V} \coloneqq \{v\in\Ltwo\mid v\vert_K\in\mathbb{Q}_p(K),K\in\coarse{\Mesh}\},
\end{equation}
where $\mathbb{Q}_p$ refers to the polynomial space with partial degree up to $p \in \mathbb{N}_0$. 
We define the~$L^2$-projection~$\coarse{\Pi}\colon\Ltwo\to\coarse{V}$ onto the polynomial space by 
\begin{equation}
  (\coarse{\Pi}v,\coarse{v})_{\Ltwo} = (v,\coarse{v})_{\Ltwo}
\end{equation}
for all~$\coarse{v}\in\coarse{V}$. Since the space consists of discontinuous polynomials, the projection is a local operation in the sense that it can be decomposed into element-wise contributions. On each element, we have the following properties for the~$L^2$-projection: for any element~$K\in\coarse{\Mesh}$, we have~$L^2$-stability, an approximation estimate, and an inverse inequality, i.e., 
\begin{subequations}\label{eq:Ltwo_projection_properties}
  \begin{alignat}{2}
    \| \coarse\Pi v\|_{L^2(K)} &\leq \| v \|_{L^2(K)},\qquad &&v\in\Ltwo,\label{eq:Ltwo_projection_stability}\\
    \| v - \coarse{\Pi}v\|_{L^2(K)} &\lesssim H^k \| v \|_{H^k(K)},\qquad &&v\in H^k(\coarse{\Mesh}),\quad k \leq p+1,\label{eq:Ltwo_projection_approximation}\\
    \| \nabla (\coarse\Pi v)\|_{L^2(K)} &\lesssim H^{-1}\| \coarse\Pi v \|_{L^2(K)},\qquad &&v\in\Ltwo,\label{eq:Ltwo_projection_inverse}
  \end{alignat}
\end{subequations}
where the piecewise Sobolev spaces are defined for~$k\geq 1$ by 
\begin{equation}
  H^{k}(\coarse{\Mesh}) \coloneqq \{v\in\Ltwo\mid v\vert_K\in H^k(K),K\in\coarse{\Mesh}\},\qquad H^0(\Domain)\coloneqq\Ltwo.
\end{equation}
The higher-order LOD method as introduced in~\cite{Mai21} now constructs a finite-dimensional multiscale space by minimizing the elliptic energy under a constraint. That is, we define an operator~$\mathcal{R}\colon\Ltwo\to\Hloc$ such that for any~$v\in\Ltwo$ we have 
\begin{equation}\label{eq:minimization}
  \mathcal{R}v \coloneqq \argmin_{w\in\Hloc} a(w, w)\quad\textrm{s.t.}\quad\coarse{\Pi}w = \coarse{\Pi}v.
\end{equation}
From a more practical point of view, the minimization problem~\eqref{eq:minimization} can be rephrased as a saddle point problem. That is, we seek $(\mathcal{R}v,\xi) \in \Hloc \times V_H$ such that
\begin{equation}\label{eq:saddle_pointR}
  \begin{alignedat}{3}
    &a(\mathcal{R}v, w) && + (\xi,w)_{L^2(\Omega)} &&= 0,\\
    &(\mathcal{R}v, \mu)_{L^2(\Omega)} && && = (v, \mu)_{L^2(\Omega)}
  \end{alignedat}
\end{equation}
for all $w \in \Hloc,\,\mu \in V_H$. 
The multiscale space is then defined by~$\coarse{\widetilde{V}} = \mathcal{R}\coarse{V}$. A basis of the space is given by $\{\mathcal{R}\Lambda_i\}_i$ if $\{\Lambda_i\}_i$ is a basis of $V_H$.
The space $\coarse{\widetilde{V}}$ has many favorable properties. First, it has the same dimension as the original polynomial space~$\coarse{V}$ and inherits its coarse-scale properties due to the constraint imposed on the functions. Moreover, by design the space~$\coarse{\widetilde{V}}$ is conforming. Finally, we have by~\eqref{eq:saddle_pointR} that 
\begin{equation}\label{eq:ortho} 
    \coarse{\widetilde{V}} \bot_a W,\qquad \text{where}\quad W \coloneqq \ker \Pi_H\vert_{H^1_0(\Omega)}. 
\end{equation}
From this, we directly obtain that $\mathcal{R}v \in \coarse{\widetilde{V}}$ is the orthogonal projection of $v$ onto $\coarse{\widetilde{V}}$.  

The above-mentioned properties lead to the following consequences in the context of elliptic problems: We consider for some~$f\in\Ltwo$ the solution~$u\in\Hloc$ to
\begin{equation}
  a(u,v) = (f, v)_{\Ltwo}
\end{equation}
for all~$v\in\Hloc$. Provided that $f$ is piecewise sufficiently regular, we obtain with $u - \mathcal R u \in W$, the orthogonality~\eqref{eq:ortho}, and~\eqref{eq:Ltwo_projection_properties} 
\begin{equation}\label{eq:error_elliptic}
  \begin{aligned}
    \|\nabla(u-\mathcal{R}u)\|_{\Ltwo}^2 &\lesssim a(u-\mathcal{R}u,u-\mathcal{R}u) = a(u, u- \mathcal{R}u) = (f, u-\mathcal{R}u)_{\Ltwo}\\
    & = (f- \coarse{\Pi}f, (u-\mathcal{R}u) - \coarse{\Pi}(u-\mathcal{R}u))_{\Ltwo}
    \\&\lesssim H^{k} \|f\|_{H^k(\coarse{\Mesh})} H \|\nabla (u-\mathcal{R}u)\|_{\Ltwo},
  \end{aligned}
\end{equation}
where~$k \leq p+1$. That is,
\begin{equation}\label{eq:errideal}
    \|\nabla(u-\mathcal{R}u)\|_{\Ltwo} \lesssim H^{k+1}\|f\|_{H^k(\coarse{\Mesh})}.
\end{equation}
Therefore, the function $\mathcal{R}u$ is a higher-order approximation of~$u$.  
Note that by design $\mathcal{R}u$ is also the Galerkin approximation in~$\coarse{\widetilde{V}}$. 
We emphasize that~\eqref{eq:errideal} holds independently of the regularity of the solution~$u$. That is, $u \in \Hloc$ is sufficient and regularity of the coefficient~$A$ beyond~$L^\infty(\Domain)$ is not required. Moreover, the construction is not limited by strong heterogeneities of the coefficient. 
This is essential in the context of multiscale problems, where for strongly heterogeneous materials classical finite element methods would require the mesh size to globally resolve all fine oscillations even if only coarse approximations are of interest. %
The construction is therefore particularly useful for time-dependent problems where it is favorable to set up smaller linear systems as they need to be solved in every time step. 

\subsection{Localization}\label{subsec:localization}
To construct the multiscale space~$\coarse{\widetilde{V}}$, we have to solve the constrained minimization problem~\eqref{eq:minimization} (or rather the saddle-point problem~\eqref{eq:saddle_pointR}) for every  basis function in $\coarse{V}$. Practically, this is done by discretizing $\Hloc$ on a mesh whose mesh size is small enough to resolve all heterogeneities of the coefficient~$A$. 
This results in a large computational overhead compared to a classical finite element method on a fine mesh. More importantly, this is practically infeasible since we aim at avoiding (global) fine-scale computations completely.\footnote{We do not further discuss the fine-scale discretization for the sake of readability. We emphasize, however, that the effect of the fine-scale discretization can be rigorously taken into account as well, see, e.g., \cite{Mai21}. In essence, all the results can be transferred to the fully discretized setting.}

Fortunately, we can leverage two further advantages of the method. First, the minimization problem can be computed trivially parallel for all basis functions in $\coarse{V}$. 
Second, the solutions to the minimization problem~\eqref{eq:minimization} exhibit an exponential decay property. That is, away from the support of a local basis function $\Lambda$, the contribution of~$\mathcal{R}\Lambda$ is exponentially small as stated in Theorem~\ref{thm:decay} below. %
Such a property is crucial for the effectiveness of the method. The minimization problem needs to be computed only on small subdomains, which greatly reduces the computational overhead. %
The local computational region is given for~$\ell\in\naturalnumbers$ by the~\emph{$\ell$-Patch}~$\Nb[\ell][S]$ around a subdomain~$S\subset\Omega$, defined by the recursion 
\begin{equation}\label{eq:patch}
  \Nb[1][S] \coloneqq \interior \bigcup \{\overline{\Element}\in\coarse{\Mesh} \mid \overline{\Element}\cap \overline{S}\neq\emptyset \},\qquad \Nb[\ell+1][S] = \Nb[1][{\Nb[\ell][S]}],\quad \ell\in\naturalnumbers.
\end{equation}
In the following, we abbreviate~$\Nb[][S]=\Nb[1][S]$, and~$\Nb[0][S] = S$. 
Then, we have the following decay result from~\cite[Thm.~4.1]{Mai21}. 
\begin{theorem}[exponential decay]\label{thm:decay}
  Let~$\ell\in\naturalnumbers$, $K\in\coarse\Mesh$, and~$\Lambda\in\coarse V$ be a basis function with $\supp \Lambda = K$. Then, there exists a generic constant~$C>0$ such that 
  \begin{equation}
    \|\nabla\mathcal{R}\Lambda\|_{L^2(\Domain\setminus\Nb[\ell])} \lesssim \exp(-C\ell) \|\nabla\mathcal{R}\Lambda\|_{\Ltwo}.
  \end{equation}
\end{theorem}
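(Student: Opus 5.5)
We prove the decay by the standard Caccioppoli-type cutoff argument combined with an iteration over layers of elements. Set $\psi = \mathcal{R}\Lambda$. By the saddle point formulation~\eqref{eq:saddle_pointR}, there is a multiplier $\xi\in\coarse{V}$ with $a(\psi,w) = -(\xi,w)_{\Ltwo}$ for all $w\in\Hloc$. In particular, $a(\psi,w)=0$ for every $w\in W$. The constraint $\coarse\Pi\psi = \Lambda$ gives $\coarse\Pi\psi = 0$ on every element outside $K$.

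The heart of the argument is a cutoff estimate. For $m\ge 2$, let $\eta$ be a piecewise (multi)linear cutoff function on $\coarse\Mesh$. It vanishes on $\Nb[m-1]$, equals $1$ on $\Domain\setminus\Nb[m]$, and satisfies $\|\nabla\eta\|_{L^\infty}\lesssim H^{-1}$. We would like to test with $\eta\psi$, but this function is not in $W$. So we correct it by setting $w = \eta\psi - z$, where $z\in\Hloc$ is a local bubble-type function supported in the ring $R=\Nb[m]\setminus\Nb[m-1]$ (possibly enlarged by one layer) with $\coarse\Pi z = \coarse\Pi(\eta\psi)$. On elements outside $K$ where $\eta$ is constant, $\coarse\Pi(\eta\psi)$ already vanishes, so the correction is only needed on the ring.

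To build $z$, we use a local right inverse of $\coarse\Pi$. On each element, take $b_K\,q$ with $b_K$ an interior bubble and $q\in\mathbb{Q}_p(K)$, chosen so that the $L^2$-projection matches. This yields element-wise stability
\[
\|\nabla z\|_{L^2(K)}\lesssim H^{-1}\|\coarse\Pi(\eta\psi)\|_{L^2(K)}.
\]
On the ring, $\coarse\Pi\psi=0$, so
\[
\coarse\Pi(\eta\psi) = \coarse\Pi\bigl((\eta-\bar\eta_K)\psi\bigr),
\]
where $\bar\eta_K$ is the mean of $\eta$ on $K$. Hence $\|\coarse\Pi(\eta\psi)\|_{L^2(K)}\lesssim H\|\nabla\eta\|_\infty\|\psi\|_{L^2(K)}\lesssim\|\psi\|_{L^2(K)}$. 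Moreover, because $\coarse\Pi\psi=0$ there, $\|\psi\|_{L^2(K)} = \|\psi-\coarse\Pi\psi\|_{L^2(K)}\lesssim H\|\nabla\psi\|_{L^2(K)}$ by~\eqref{eq:Ltwo_projection_approximation}. Thus $\|\nabla z\|_{\Ltwo}\lesssim\|\nabla\psi\|_{L^2(R)}$.

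Next we bound the energy outside $\Nb[m]$. Using $a(\psi,w)=0$,
\[
\|\nabla\psi\|^2_{L^2(\Domain\setminus\Nb[m])}\lesssim a(\psi,\eta\psi) - (A\nabla\psi,\psi\nabla\eta)_{\Ltwo} = a(\psi,z) - (A\nabla\psi,\psi\nabla\eta)_{\Ltwo}.
\]
Both terms on the right are supported in the ring $R$. They are bounded by $\|\nabla\psi\|_{L^2(R)}^2$, using $\|\psi\nabla\eta\|_{L^2(R)}\lesssim H^{-1}\|\psi\|_{L^2(R)}\lesssim\|\nabla\psi\|_{L^2(R)}$ (again by the Poincaré-type bound from the vanishing projection).

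This gives $\|\nabla\psi\|^2_{\Domain\setminus\Nb[m]}\le C'\|\nabla\psi\|^2_{\Nb[m]\setminus\Nb[m-2]}$ (widening the ring by one layer to house $z$). Adding $C'\|\nabla\psi\|^2_{\Domain\setminus\Nb[m]}$ to both sides yields
\[
\|\nabla\psi\|^2_{\Domain\setminus\Nb[m]}\le \tfrac{C'}{1+C'}\|\nabla\psi\|^2_{\Domain\setminus\Nb[m-2]}.
\]
Iterating this with step $2$ (or $3$) from $m=\ell$ down to about $1$ gives the geometric factor $(C'/(1+C'))^{\lfloor\ell/3\rfloor}=\exp(-C\ell)\cdot$const. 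Finally, bounding the starting term by $\|\nabla\psi\|_{\Ltwo}$ proves the theorem.

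The main obstacle is the construction of $z$. It must be supported within the ring, stay away from $K$ (which requires $m\ge 2$ or so), and satisfy the constraint with an $H^{-1}$-weighted stability bound for higher-order $\mathbb{Q}_p$ projections. This relies on the bubble right-inverse argument, whose constants depend only on $p$ and on mesh regularity. Boundary elements are handled in the same way, since bubbles vanish on $\partial\Domain$.
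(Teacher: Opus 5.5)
Your proof is correct. It follows the standard cutoff-and-correction argument behind the result the paper cites, \cite[Thm.~4.1]{Mai21}: you test the $a$-orthogonality of $\mathcal{R}\Lambda$ to $W$ with $\eta\,\mathcal{R}\Lambda$ minus an element-wise bubble correction (the local right inverse of $\coarse{\Pi}$, cf.\ \cite[Cor.~3.6]{Mai21}), bound the ring terms using $\coarse{\Pi}\mathcal{R}\Lambda=0$ away from $K$, and iterate geometrically. Two steps are unnecessary: the element-local bubbles already keep $z$ inside $\Nb[m]\setminus\Nb[m-1]$, so you need not widen the ring, and $\|\coarse{\Pi}(\eta\psi)\|_{L^2(K')}\le\|\psi\|_{L^2(K')}$ suffices without subtracting the mean of $\eta$.
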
%
The decay result in Theorem~\ref{thm:decay} motivates the localization of the minimization problem~\eqref{eq:minimization} as described in the following. We define the operator~$\widetilde{ \mathcal{R}}^{[\ell]}_K\colon L^2(K)\to H^1_0(\Nb[\ell])$ such that for any~$v\in\Ltwo$ we have 
\begin{equation}\label{eq:locRK}
  \widetilde {\mathcal{R}}^{[\ell]}_Kv \coloneqq \argmin_{w\in H^1_0(\Nb[\ell])} a(w, w)\quad\textrm{s.t.}\quad\coarse{\Pi}w = \coarse{\Pi}(v\vert_K).
\end{equation}
Equivalently, the problem can be rephrased as a localized saddle point problem similar as in~\eqref{eq:saddle_pointR}. 
The corresponding localized multiscale operator~$\widetilde {\mathcal{R}}^{[\ell]}$ is then given as the sum of the element-wise localized contributions, i.e., 
\begin{equation}\label{eq:deftRell} 
  \widetilde{ \mathcal{R}}^{[\ell]} = \sum_{K\in\coarse{\Mesh}} \widetilde{\mathcal{R}}^{[\ell]}_K,
\end{equation} 
and the localized multiscale space is defined by $\widetilde{\mathcal{R}}^{[\ell]} \coarse{V}$. While the optimal convergence of the ideal approach in~\eqref{eq:errideal} does not hold anymore, it holds up to an exponential error in~$\ell$. In particular, for $\ell\sim C_p|\log H|$, the higher-order rates can be recovered. Here, the constant $C_p$ scales at most quadratically with $p$, see~\cite[Cor.~4.6]{Mai21}. Although a certain scaling with $p$ is clearly necessary due to the envisioned higher-order rates, numerical experiments indicate that the theoretically predicted quadratic scaling seems to be suboptimal. 

We emphasize that the simple localization strategy defined in~\eqref{eq:locRK} introduces  a polluting effect in the localization error, see~\cite[Thm.~4.4]{Mai21}. That is, the total error in $H^1(\Omega)$ scales as $\mathcal{O}(H^{p+2} + H^{-1}\exp(-C\ell))$, where we omit the precise scaling of $p$ in the constants. In particular, for a fixed $\ell$ the errors may increase when decreasing the mesh size $H$. This is not a theoretical artifact, as such a behavior is also observed numerically, see~\cite[Fig.~5.2]{Mai21} and~\cite[Fig~7.2]{DonHM23}. 

\subsection{Stabilized localization}
To avoid the problematic scaling of $\mathcal O(H^{-1})$ in the localization error, specifically designed stabilized variants of the localization strategy can be used. In the following, we present the two stabilized strategies developed in~\cite{DonHM23} and \cite{HauLM25}, respectively. For the former, we only provide a high-level introduction, and present the latter (more general) approach in more detail. 

\subsubsection{Extended bubble functions}\label{subsubsec:extended_bubble}
First, we consider the stabilization used in~\cite{DonHM23}, which is based on ideas presented in~\cite{Altmann2021,HauP22}. 
To understand the following construction, we first connect the ideal construction in~\eqref{eq:saddle_pointR} to the classical LOD,  which is defined in terms of \emph{corrections}, see~\cite{MalP20}. That is, we would like to write $\mathcal R = \identity - \mathcal C$ for an appropriate operator $\mathcal C$. However, such a construction is only feasible for $\Hloc$-conforming functions. Therefore, we replace $\coarse{V} \not\subset \Hloc$ by a space $\coarse{U} \subset \Hloc$ with the property that there is a one-to-one relation between a polynomial basis function $\Lambda \in \coarse{V}$ and a basis function $\Phi \in \coarse{U}$, characterized by $\Pi_H\Phi = \Lambda$ and $\supp \Phi = \supp \Lambda$. The existence of such a \emph{bubble space} follows from~\cite[Cor.~3.6]{Mai21}. By construction, we have that $\mathcal R \coarse{V} = \mathcal R \coarse{U}$ and the restriction $\mathcal R\vert_{\coarse{U}}\colon \coarse{U} \to \Hloc$ allows for the splitting $\mathcal R  \vert_{\coarse{U}} = \identity - \mathcal C$, where $\mathcal C\colon \coarse{U} \to W = \ker \Pi_H\vert_{\Hloc}$ is defined, for any $\coarse{b} \in \coarse{U}$ by
\begin{equation}\label{eq:corr}
    a(\mathcal C \coarse{b},w) = a(\coarse{b},w)
\end{equation}
for all $w \in W$. This resembles the classical setting of the LOD. The corresponding localization of element-wise contributions recovers the space~$\widetilde{ \mathcal{R}}^{[\ell]} \coarse{V}$ with~$\widetilde{ \mathcal{R}}^{[\ell]}$ defined in~\eqref{eq:deftRell}. Recall, however, that this construction is subject to the suboptimal behavior of the localization error with a pre-factor~$H^{-1}$. The factor can be avoided by a simple trick: we relax the condition $\supp \Phi_{K,0} = \supp \Lambda_{K,0}$ on all the \emph{bubble functions} $\Phi_{K,0}$ associated to the zero-order basis functions $\Lambda_{K,0}$, $K \in \coarse{\Mesh}$. More precisely, we allow a larger support in~$\Nb[1]$, see Figure~\ref{fig:xbubble} for an illustration in one dimension. In particular, this slightly modifies the localization strategy, which is applied element-wise as for the classical LOD. For a precise construction, we refer to~\cite{DonHM23}. Therein, it is shown that this small modification cures the suboptimal scaling both theoretically and practically. 

Note that all but the zero-order basis functions of the localized multiscale space can be computed as defined in~\eqref{eq:locRK}. Only the zero-order functions require the explicit construction of (extended) bubble functions.  

\begin{figure}[!t]
    \centering
    \includegraphics[width=\linewidth]{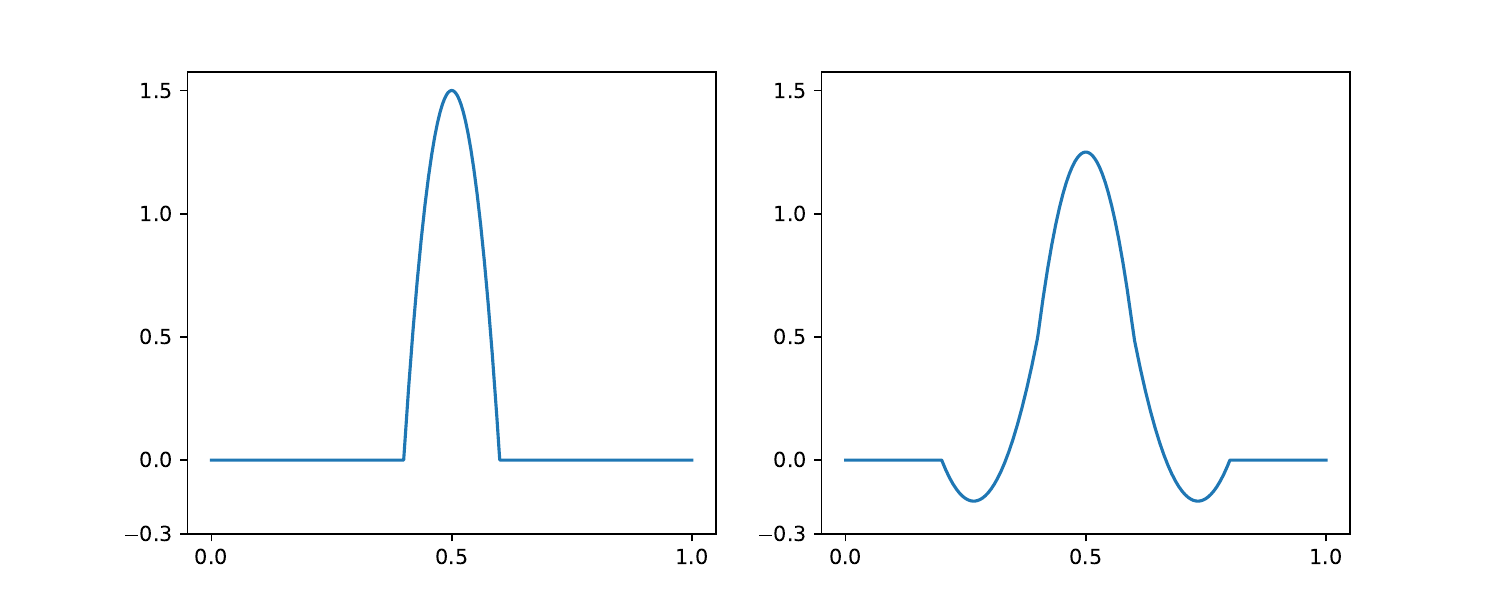}%
    \caption{Illustration of a zero-order bubble function (left) and its stabilized (extended) version (right) for~$p=0$, and~$H=\tfrac15$ in one dimension. 
    }
    \label{fig:xbubble}
\end{figure} 

\subsubsection{A generalized framework}
To avoid the explicit construction of bubble functions, another localization strategy is presented in~\cite{HauLM25} based on a clever splitting of the operator $\mathcal R$ introduced in~\eqref{eq:minimization}. In the following, this strategy is briefly summarized. 
Let~$\coarse{\mathcal{I}}\colon \Ltwo \to \coarse{V}^\mathrm{cg}$ be a local quasi-interpolation operator that maps into piecewise (multi-)linear polynomials that are additionally conforming. A possible choice is $\coarse{\mathcal{I}} = \coarse{\mathcal{E}}\circ \coarse{\Pi}^{0}$, where~$\coarse{\Pi}^{0}$ is the~$L^2$-projection onto piecewise constant polynomials and~$\coarse{\mathcal{E}}$ an averaging operator that maps into piecewise (multi-)linear polynomials that are overall continuous and fulfill zero boundary conditions. More precisely, for any~$v\in\Ltwo$ at an interior node~$z\in\coarse\Mesh$, we have with the definition of patches in~\eqref{eq:patch} that 
\begin{equation}
  (\coarse{\mathcal{I}}v)(z) \coloneqq \sum_{K\in\Nb[][\{z\}]}\frac{|K|}{|\Nb[][\{z\}]|} \int\limits_K v \dx,
\end{equation}
see~\cite{ErnG17} for further details on such constructions. 
The main idea is now to do a similar splitting of $\mathcal R$ as in Section~\ref{subsubsec:extended_bubble}, but replacing the identity operator. For the non-localized method, we set $\mathcal R = \coarse{\mathcal I} - \mathcal K $, where $\mathcal K \colon \Ltwo \to \Hloc$ is defined, for any $v \in \Ltwo$, via the pair $(\mathcal K v, \lambda) \in \Hloc \times \coarse{V}$ that solves
\begin{equation}\label{eq:saddle_pointglobal}
  \begin{alignedat}{3}
    &a(\mathcal{K}v, w) && + (w,\lambda)_{\Ltwo} &&= a (\coarse{\mathcal{I}}v, w),\\
    &(\mathcal{K}v, \mu)_{\Ltwo} && && = -c(v,\mu)
  \end{alignedat}
\end{equation}
for all $w \in \Hloc$ and $\mu \in \coarse{V}$. Here, $c(v,\mu) \coloneqq (v-\coarse{\mathcal{I}}v, \mu)_{\Ltwo}$. Note that $\mathcal R = \coarse{\mathcal I} - \mathcal K $ holds by design. Importantly, this splitting involves two operators that map into $\Hloc$ without constructing bubble functions. 
To construct a localized operator, we define
\begin{equation*} 
    \mathcal{K}^{[\ell]}_K\colon\Ltwo\to H^1_0(\Nb[\ell])
\end{equation*}
for any~$v\in\Ltwo$ via the solution pair 
\begin{equation*} 
    (\mathcal{K}^{[\ell]}_Kv,\lambda_K^{[\ell]}) \in H^1_0(\Nb[\ell]) \times \coarse{V}(\Nb[\ell])
\end{equation*}
that solves
\begin{equation}\label{eq:saddle_point}
  \begin{alignedat}{3}
    &a(\mathcal{K}^{[\ell]}_Kv, w) && + (w,\lambda_K^{[\ell]})_{L^2(\Nb[\ell])} &&= a_K (\coarse{\mathcal{I}}v, w),\\
    &(\mathcal{K}^{[\ell]}_Kv, \mu)_{L^2(\Nb[\ell])} && && = -c_K(v,\mu)
  \end{alignedat}
\end{equation}
for all~$w\in H^1_0(\Nb[\ell])$ and~$\mu\in\coarse{V}(\Nb[\ell])$, 
where the restricted bilinear forms are defined by 
\begin{equation}
  a_K (v,w) \coloneqq (A\nabla v, \nabla w)_{L^2(K)},\qquad c_K(v,\mu) = (v-\coarse{\mathcal{I}}v, \mu)_{L^2(K)}.
\end{equation}
The localized operator~$\mathcal{R}^{[\ell]}$ is then given by~$\mathcal{R}^{[\ell]} \coloneqq \coarse{\mathcal{I}} - \mathcal{K}^{[\ell]}$, where 
\begin{equation*} 
    \mathcal{K}^{[\ell]} \coloneqq \sum_{K\in\coarse{\Mesh}}\mathcal{K}^{[\ell]}_K.
\end{equation*}
%
%
The construction of a higher-order localized multiscale space is completed by finally re-defining~$\coarse{\widetilde{V}}\coloneqq\mathcal{R}^{[\ell]}\coarse{V}$. %
This space can now be used in a Galerkin fashion for an elliptic model problem and yields higher-order convergence rates, only relying on the regularity of the right-hand side, provided that $\ell$ is chosen appropriately. 

\subsection{Reduced convergence for the wave equation}
We now discuss the use of the previously introduced multiscale space~$\coarse{\widetilde{V}}$ in the context of the acoustic wave equation with a strongly heterogeneous coefficient. Generally, time-dependent problems are ideal to employ multiscale spaces since the overhead of constructing problem-adapted spaces quickly pays off by smaller linear systems in every time step. This is especially appealing if higher-order convergence rates in space can be obtained. 
However, the above construction suffers from a reduced convergence rate when applied to the acoustic wave equations, see~\cite{KruM25} for further details. Theoretically and practically, it turns out that the multiscale space is not well-suited for the wave equation in the very general setting, where we only have~$A\in L^\infty(\Domain)$. Note that this is independent of the chosen time discretization. %

In the following, we investigate the use of the higher-order multiscale space for the discretization in space and illustrate the above-mentioned reduced convergence rates. We consider the weak formulation of the wave equation that seeks a function~$u\colon [0,T]\to\Hloc$ such that for almost every~$t\in[0,T]$ 
\begin{equation}\label{eq:variational_wave}
  ( \ddot u(t),v)_{H^{-1}(\Domain)\times\Hloc} + a(u(t),v) = (f(t),v)_{\Ltwo}
\end{equation}
for all~$v\in\Hloc$ with $\Domain$ as before. Provided that~$f\in L^2(0,T;\Ltwo)$ and~$u$ fulfills the initial conditions~$u(0)=u_0\in\Hloc$ and~$\dot u(0)=v_0\in\Ltwo$, the solution is unique~(\cite[Ch.~7.2]{Eva10}) with 
\begin{equation}
  u\in L^2(0,T;\Hloc),\quad \dot u\in L^2(0,T;\Ltwo),\quad \ddot u\in L^2(0,T;H^{-1}(\Domain)).
\end{equation}
To have sufficient regularity in time, we require well-preparedness and compatibility conditions as in~\cite{Abdulle2017}. These conditions essentially control the norms of the solution~$u$ and its time-derivatives at the initial time by forcing the solution to be `in line' with the heterogeneity of the coefficient. As an example, smooth initial conditions are not to be expected in a strongly heterogeneous material, see also~\Cref{rem:compatibility}. %

The required assumptions are summarized in the following and will be employed for the remainder of this work. %
\begin{assumption}[Compatibility and well-preparedness]\label{ass:regularity}
  Let $k,m\in\naturalnumbers_0$ such that
  \begin{enumerate}[label={(A\arabic*)},ref=\thedefinition~(A\arabic*)]
    \crefalias{enumi}{assumption}
    \item $\RHS\in\SmoothSpace{m}[{[0,T];\Sobolevtwo{k}[\coarse{\Mesh}]}]$,
    \item $\BulkSolution(0) = \BulkSolution_0\in\Hloc$, $\partialt\BulkSolution(0) = \BulkTestFunction_0\in\Hloc$,
    \item \label{ass:compatibility} $\partialt[\nu]\BulkSolution(0) = \partialt[{\nu-2}] \RHS(0) - \divergence (\PDEcoefficient \nabla \partialt[{\nu-2}]\BulkSolution(0))\in\Hloc$ for $\nu=2, \dots, m$,
    \item $\partialt[m+1]\BulkSolution(0) = \partialt[{m-1}] \RHS(0) - \divergence (\PDEcoefficient \nabla \partialt[{m-1}]\BulkSolution(0))\in\Ltwo$. 
    \item Further, suppose that there exists an $\varepsilon$-independent constant $\ConstantInit$ such that
    \begin{equation*}
      \sum_{\nu=0}^{m}\Norm{\partialt[\nu] \BulkSolution(0)}{\Sobolevtwo{1}} + \Norm{\partialt[m+1]\BulkSolution(0)}{\Ltwo} \leq \ConstantInit.
    \end{equation*}
  \end{enumerate}
\end{assumption}
\begin{remark}\label{rem:compatibility}
  We note that in \Cref{ass:regularity}, (A1), the regularity~$k\in\naturalnumbers_0$ only contributes to the (improved) spatial convergence behavior and is not necessary for the existence or uniqueness of a solution. 
  The other assumptions are to be understood as compatibility and  well-preparedness conditions. That is, initial conditions need to suitably match the heterogeneity of the underlying coefficient, which is physically reasonable. We emphasize that the assumptions can easily be satisfied with zero initial conditions, in which case a wave is only generated by the external source term over time. 
  
  The compatibility conditions also ensure that the temporal derivatives of a solution solve a wave equation as well, which can be used to derive the regularity results
  \begin{equation*}
    \partialt[m]u\in L^2(0,T;\Hloc),\quad \partialt[m-1] u\in L^2(0,T;\Ltwo),\quad \partialt[m-2] u\in L^2(0,T;H^{-1}(\Domain)).
  \end{equation*}
\end{remark}
To discretize~\eqref{eq:variational_wave}, we first define a semi-discrete higher-order LOD method. That is, we seek a function~$\coarse{\tilde{u}}\colon [0,T]\to\coarse{\widetilde{V}}$ such that for almost every~$t\in[0,T]$ 
\begin{equation}\label{eq:lod_variational_wave}
  (\partialt[2] \coarse{{\tilde{u}}}(t),\coarse{\tilde{v}})_{\Ltwo} + a(\coarse{\tilde{u}}(t),\coarse{\tilde{v}}) = (f(t),\coarse{\tilde{v}})_{\Ltwo}
\end{equation}
for all~$\coarse{\tilde{v}}\in\coarse{\widetilde{V}}$, where the initial conditions are given by~$\coarse{\tilde{u}}(0)=\mathcal{R}^{[\ell]} u_0$ and~$\partialt \coarse{\tilde{u}}(0)=\mathcal{R}^{[\ell]} v_0$. 

As mentioned above, under minimal regularity assumptions on the coefficient only reduced convergence rates are obtained for the semi-discrete method defined in~\eqref{eq:lod_variational_wave}. This is also confirmed by numerical investigations, see~\cite{KruM25} and \Cref{sec:4} below. We illustrate the reason in the following theorem, for which we consider the ideal non-localized setting ($\ell = \infty$) for better readability.

\begin{theorem}[second-order convergence]\label{thm:waves:sd_error_plod}
  Let~$A \in L^\infty(\Domain)$, $\ell = \infty$, and suppose that~\Cref{ass:regularity} holds for some~$k\in\naturalnumbers$, $k \leq p + 1$, and~$m\geq 4$. Further, let~$\BulkSolution$ be the solution to~\eqref{eq:variational_wave} and~$\msBulkSolution$ the solution to~\eqref{eq:lod_variational_wave}. Then for~$t\in[0,T]$, we have 
  \begin{equation}\label{eq:waves:sd_error_plod}
    \LtwoNorm{\partialt(\BulkSolution(t) - \msBulkSolution(t))} + \HlocNorm{(\BulkSolution(t) - \msBulkSolution(t))} \lesssim_\finalTime H^2.
  \end{equation}
\end{theorem}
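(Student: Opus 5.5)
We use the standard Ritz-projection splitting, taking $\mathcal{R}$ (with $\ell=\infty$) as the Ritz projection onto $\coarse{\widetilde{V}}$. By \eqref{eq:ortho}, $\mathcal{R}$ is the $a$-orthogonal projection onto $\coarse{\widetilde{V}}$. Write
\begin{equation*}
  \BulkSolution - \msBulkSolution = (\BulkSolution - \mathcal{R}\BulkSolution) + (\mathcal{R}\BulkSolution - \msBulkSolution) =: \rho + \theta .
\end{equation*}
The discrete error $\theta(t)\in\coarse{\widetilde{V}}$ satisfies $\theta(0)=0$ and $\partialt\theta(0)=0$, because of the choice of initial data $\mathcal{R}u_0$ and $\mathcal{R}v_0$. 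Subtracting \eqref{eq:lod_variational_wave} from \eqref{eq:variational_wave} and using $a(\rho,\tilde v)=0$ for $\tilde v\in\coarse{\widetilde{V}}$ gives the error equation
\begin{equation*}
  (\partialt[2]\theta,\tilde v)_{\Ltwo} + a(\theta,\tilde v) = -(\partialt[2]\rho,\tilde v)_{\Ltwo}\qquad \text{for all } \tilde v\in\coarse{\widetilde{V}}.
\end{equation*}
Testing with $\tilde v=\partialt\theta$ and integrating in time yields the energy estimate
\begin{equation*}
  \|\partialt\theta(t)\|_{\Ltwo}+\|\nabla\theta(t)\|_{\Ltwo}\lesssim_T \sup_{s\le t}\|\partialt[2]\rho(s)\|_{\Ltwo}.
\end{equation*}
Alternatively, to require fewer derivatives, one can integrate by parts in time and use $\|\partialt\rho\|$ and $\|\partialt[2]\rho\|$ in an $H^{-1}$-type norm. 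The task therefore reduces to bounding the projection errors $\rho$, $\partialt\rho$, $\nabla\rho$ and $\partialt[2]\rho$.

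The key step is the estimate of $\rho$. Since $\partialt[j]$ commutes with $\mathcal{R}$, we have $\partialt[j]\rho=(\identity-\mathcal{R})\partialt[j]\BulkSolution$. For each fixed $t$, $\partialt[j]\BulkSolution(t)$ solves an elliptic problem $a(\partialt[j]\BulkSolution,v)=(g_j,v)$ with $g_j=\partialt[j]\RHS-\partialt[j+2]\BulkSolution$. Repeating the argument of \eqref{eq:error_elliptic}, using $(\identity-\mathcal{R})w\in W$ and the $L^2$-orthogonality to $\coarse{V}$, gives
\begin{equation*}
  \|\nabla(\identity-\mathcal{R})\partialt[j]\BulkSolution\|_{\Ltwo}\lesssim H\,\|g_j-\coarse{\Pi}g_j\|_{\Ltwo}.
\end{equation*}
This is exactly where the rate saturates. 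The term $\partialt[j]\RHS$ is piecewise $H^k$ and gives $H^{k}$. The term $\partialt[j+2]\BulkSolution$, however, is only known to be in $\Hloc$ (for $j+2\le m$) by \Cref{rem:compatibility} and \Cref{ass:regularity}. It has no higher piecewise regularity because $A$ is rough, so \eqref{eq:Ltwo_projection_approximation} gives only $\|\partialt[j+2]\BulkSolution-\coarse{\Pi}\partialt[j+2]\BulkSolution\|_{\Ltwo}\lesssim H\|\nabla\partialt[j+2]\BulkSolution\|_{\Ltwo}$. With $k\ge1$ this yields
\begin{equation*}
  \|\nabla\partialt[j]\rho\|_{\Ltwo}\lesssim H^2\bigl(\|\RHS\|_{C^{j}(H^k(\coarse{\Mesh}))}+\|\nabla\partialt[j+2]\BulkSolution\|_{\Ltwo}\bigr).
\end{equation*}
For the $L^2$ norm, note that $\partialt[j]\rho\in W$ is $L^2$-orthogonal to $\coarse{V}$. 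Hence $\|\partialt[j]\rho\|_{\Ltwo}=\|\partialt[j]\rho-\coarse\Pi\partialt[j]\rho\|_{\Ltwo}\lesssim H\|\nabla\partialt[j]\rho\|_{\Ltwo}\lesssim H^3$, which is more than enough. Applying this with $j=0,1,2$ requires $\nabla\partialt[4]\BulkSolution$ bounded in time, which is where $m\ge4$ enters. The needed uniform-in-time bounds $\sup_t\|\nabla\partialt[\nu]\BulkSolution\|_{\Ltwo}\lesssim_T \ConstantInit+\|\RHS\|_{C^{m}}$ follow from the standard energy estimate applied to the differentiated wave equations, which is allowed by the compatibility conditions (A3)–(A5).

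Combining the bounds, $\|\partialt\rho(t)\|_{\Ltwo}+\|\nabla\rho(t)\|_{\Ltwo}\lesssim H^2$ and the $\theta$-estimate is $\lesssim_T H^2$. The triangle inequality then gives \eqref{eq:waves:sd_error_plod}. The main obstacle is the time-uniform regularity bound for $\nabla\partialt[4]\BulkSolution$ and $\partialt[5]\BulkSolution$ from \Cref{ass:regularity}. This requires carefully checking that the differentiated equations have admissible data; everything else is routine. I would also remark that this same step shows why $H^2$ cannot be improved without additional spatial regularity of $\partialt[2]\BulkSolution$.
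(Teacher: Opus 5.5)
Your proposal is correct and follows the same route as the paper's sketch. An energy argument for the conforming Galerkin method reduces the error to Ritz-projection errors of $\mathcal{R}$, which are then bounded via $u-\mathcal{R}u\in\ker\coarse{\Pi}$ and $a$-orthogonality as in \eqref{eq:error_elliptic}, with the rate saturating at $H^2$ because only $\nabla\partialt[2]\BulkSolution\in\Ltwo$ is available; your explicit $\rho+\theta$ splitting, with the bound on $\|\partialt[2]\rho\|_{\Ltwo}$ from the time-differentiated elliptic problems, simply makes precise what the paper summarizes as ``standard energy techniques'' and correctly locates where $m\geq 4$ is used.
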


\begin{proof}[Sketch of the proof]
  Since the higher-order multiscale space $\coarse{\widetilde{V}}$ is a conforming space with respect to $\Hloc$, we can use standard energy techniques to obtain  
  \begin{equation}
    \LtwoNorm{\partialt(\BulkSolution(t) - \msBulkSolution(t))} + \HlocNorm{(\BulkSolution(t) - \msBulkSolution(t))} \lesssim_T \|\nabla (u(t) - \mathcal{R}u(t))\|_{\Ltwo}.
  \end{equation}
  For the error on the right-hand side, we use the fact that~$\mathcal{R}$ is the orthogonal projection onto the multiscale space. This gives (similarly as in~\eqref{eq:error_elliptic})
  \begin{equation}\label{eq:noHO}
  \begin{aligned}
     \|\nabla (u(t) - \mathcal{R}u(t))\|_{\Ltwo}^2 &\lesssim a(u(t)-\mathcal{R}u(t),u(t)-\mathcal{R}u(t)) = a(u(t), u(t)- \mathcal{R}u(t))\\
      &=(f(t), u(t)-\mathcal{R}u(t)_{\Ltwo} - (\partialt[2] u(t), u(t)-\mathcal{R}u(t))_{\Ltwo}\\
      &\lesssim H^{k + 1} \|f(t)\|_{H^{k}(\coarse{\Mesh})} \|\nabla (u(t)-\mathcal{R}u(t))\|_{\Ltwo} \\&\qquad\qquad+ H^2\|\nabla \partialt[2] u(t)\|_{\Ltwo}\|\nabla (u(t)-\mathcal{R}u(t))\|_{\Ltwo},
    \end{aligned}
  \end{equation} 
  where the last estimate follows from the fact that~$u(t)-\mathcal{R}u(t)\in\ker \coarse\Pi$ and the approximation property~\eqref{eq:Ltwo_projection_approximation}. Here, we observe that the reduced convergence comes from the fact that, for very general coefficients, one cannot expect more regularity than~$\partialt[2] u(t)\in\Hloc$. Only under additional regularity assumptions on the coefficient, higher orders can be extracted from spatial regularity of~$\partialt[2] u$. We refer to~\cite{KruM25} for further details. 
\end{proof}

\section{Enriched higher-order spaces}\label{sec:3}
The inability to obtain higher-order convergence rates in the time-dependent setting originates from the construction of the multiscale space, which is specifically designed  for the elliptic setting. From a more practical point of view, this means that the space does not capture well all the spatial effects that build up throughout time. In the analysis, the effect shows in~\eqref{eq:noHO}, where higher orders cannot be extracted from spatial regularity of~$\partial_t^2 u$. Therefore, we aim at adjusting the multiscale space to suitably avoid this problematic term. The idea is to \emph{enrich} the localized space $\coarse{\widetilde V}$ with well-chosen additional functions, which has been introduced in~\cite{KalKMW26} in the context of parabolic problems. 
This section provides an overview of the methodology and shows that it can be adjusted to the wave equation as well. 

We start by defining the enriched multiscale space based on~\emph{additional correction operators}, which are recursively applied. These corrections are defined locally and can be interpreted as implicitly constructing an asymptotic expansion of the solution. We further motivate the construction in \Cref{ss:motivation} below and outline the main steps of the error analysis in \Cref{thm:waves:sd_error_eholod}. %

First, we define the additional element-wise correction
\begin{equation*} 
    \mathcal{D}^{[\ell]}_K\colon L^2(\Nb[\ell])\to H^1_0(\Nb[\ell])\cap\kernel\coarse{\Pi}
\end{equation*}
for any~$v\in L^2(\Nb[\ell])$ as the solution $\mathcal{D}^{[\ell]}_Kv \in H^1_0(\Nb[\ell])\cap\kernel\coarse{\Pi}$ to
\begin{equation}\label{eq:additional_correction}
  a(\mathcal{D}^{[\ell]}_Kv, w) = -(v, w)_{L^2(\Nb[\ell])}
\end{equation}
for all~$w\in H^1_0(\Nb[\ell])\cap\kernel\coarse{\Pi}$. As we only need to apply the additional correction operator to basis functions of the localized multiscale space~$\coarse{\widetilde{V}}$, a summation over multiple patches is not required. Recall that 
\begin{equation}
  \coarse{\widetilde{V}} = \spann \bigcup_{K\in\coarse{\Mesh}}\{\mathcal{R}^{[\ell]}_K\Lambda_{K,i}\}_{i=1}^M, 
\end{equation}
where~$M\coloneqq(p+1)^d$ and $\Lambda_{K,i}$ are standard (local) basis functions of~$\coarse{V}$. 
For~$\nu\in\naturalnumbers$, we define
\begin{equation}\label{eq:enriched_correction_space}
  \coarse{\widetilde{W}}^\nu \coloneqq \spann \bigcup_{K\in\coarse{\Mesh}}\{(\mathcal{D}^{[\ell]}_K)^\nu\mathcal{R}^{[\ell]}_K\Lambda_{K,i}\}_{i=1}^M \eqqcolon \spann \bigcup_{K\in\coarse{\Mesh}}\{\widetilde{\Lambda}_{K,i}^\nu\}_{i=1}^M, 
\end{equation}
and the \emph{enriched multiscale space} for~$j\in\naturalnumbers$ by 
\begin{equation}\label{eq:enriched_multiscale_space}
  \coarse{\widetilde{V}}^j \coloneqq \coarse{\widetilde{V}} + \coarse{\widetilde{W}}^1 + \dots + \coarse{\widetilde{W}}^j.
\end{equation}
For $j = 0$, we formally set $\coarse{\widetilde{V}}^0 \coloneqq \coarse{\widetilde{V}}$. 
To summarize the construction, the first step is to solve the saddle point problem~\eqref{eq:saddle_point} for each  basis function~$\Lambda_{K,i}\in\coarse{V}$. This gives the multiscale basis functions~$\widetilde{\Lambda}_{K,i}$, which are defined on the patches~$\Nb[\ell]$. In the elliptic setting, this space yields higher-order convergence rates, but the space is not sufficient for higher-order rates in the context of time-dependent PDEs. Thus, we correct the basis functions~$\widetilde{\Lambda}_{K,i}$ by solving~\eqref{eq:additional_correction} to obtain new functions $\widetilde{\Lambda}_{K,i}^1$. Note that the patches~$\Nb[\ell]$ are not increased in the process. This procedure may be repeated multiple times. All constructed functions are then combined to form a new enriched multiscale space. %

It turns out that increasing the dimension of the discrete space via enrichments is favorable compared to a decrease of the mesh size. Moreover, increasing~$p$ is also beneficial compared to a decrease of the mesh size, see \cite[Fig.~4.2]{KalKMW26} for an illustration.  

\subsection{Improved error estimates}\label{ss:motivation}
So far, it is not obvious why the construction of the enriched multiscale space defined in~\eqref{eq:enriched_multiscale_space} should solve the issue of stagnating orders of convergence. To provide some overall intuition, we therefore consider a fixed~$t\in[0,T]$ and set~$\ell=\infty$, which corresponds to global patches~$\Nb[\infty]=\Domain$ in~\eqref{eq:additional_correction}. The global operator $\mathcal D \colon \Ltwo \to \Hloc$ is independent of $K \in \coarse{\Mesh}$. For $v \in \Ltwo$, we obtain $\mathcal D v \in \Hloc$ by solving 
\begin{equation}\label{eq:additional_correctionGlobal} 
  a(\mathcal{D} v, w) = -(v, w)_{\Ltwo}
\end{equation}
for all $w \in \Hloc \cap \ker \coarse{\Pi}$. With~$\mathcal R$ defined in~\eqref{eq:minimization}, we define an expansion of the solution~$u$ to~\eqref{eq:variational_wave} of order $j$ by 
\begin{equation}\label{eq:defQ}
  \mathcal{Q}^ju(t) \coloneqq \sum_{\nu=0}^j \mathcal{D}^\nu\mathcal{R}(\partialt[2\nu]u(t)),
\end{equation}
which is well-defined if~$u$ is sufficiently smooth in time, see also~\Cref{ass:regularity}. Note that $\mathcal{Q}^ju(t) \in \coarse{\widetilde{V}}^j$. 
Therefore, the space $\coarse{\widetilde{V}}^j$ is a reasonable approximation space provided that $u \approx \mathcal{Q}^ju$ for an appropriate choice of $j$. 
For~$k=p+1$, we have for~$j=0$ 
\begin{equation}
  \|\nabla (u(t)-\mathcal{Q}^0 u(t))\|_{\Ltwo} \lesssim H^{p+2} \|f(t)\|_{H^{p+1}(\coarse{\Mesh})} + H^2\|\nabla \partialt[2]u(t)\|_{\Ltwo},
\end{equation}
see the proof of~\Cref{thm:waves:sd_error_plod}. 
For~$j=1$, one can show that the additional term in~\eqref{eq:defQ} leads to the improved error estimate 
\begin{equation*}
  \begin{aligned}
    \|\nabla (u(t)&-\mathcal{Q}^1 u(t))\|_{\Ltwo} \\&\lesssim H^{p+2} \|f(t)\|_{H^{p+1}(\coarse{\Mesh})} + H^2\|\nabla (\partialt[2]u(t) - \mathcal{R}(\partialt[2]u(t)))\|_{\Ltwo}\\
    &\lesssim H^{p+2} \|f(t)\|_{H^{p+1}(\coarse{\Mesh})} + H^{p+4} \|\partialt[2] f(t)\|_{H^{p+1}(\coarse{\Mesh})} + H^4\|\nabla \partialt[4]u(t)\|_{\Ltwo}.
  \end{aligned}
\end{equation*}
By applying the procedure recursively, we may obtain  
\begin{equation}
  \|\nabla (u(t)-\mathcal{Q}^j u(t))\|_{\Ltwo} \lesssim H^{p+2} + H^{2j+2}.
\end{equation}
More details on these improved error estimates are presented in \Cref{thm:waves:sd_error_eholod} below.

Before we state the precise error estimate, we define the semi-discrete \emph{enriched higher-order LOD method} for the wave equation: we seek a function~$\coarse{\tilde{u}}\colon (0,T)\to\coarse{\widetilde{V}}^j$ such that for~$t\in[0,T]$ we have %
\begin{equation}\label{eq:eholod_variational_wave}
  (\partialt[2] \coarse{{\tilde{u}}}(t),\coarse{\tilde{v}})_{\Ltwo} + a(\coarse{\tilde{u}}(t),\coarse{\tilde{v}}) = (f(t),\coarse{\tilde{v}})_{\Ltwo}
\end{equation}
for all~$\coarse{\tilde{v}}\in\coarse{\widetilde{V}}^j$, where the initial conditions are given by~$\coarse{\tilde{u}}(0)=\mathcal{Q}^j u_0$ and~$\partialt \coarse{\tilde{u}}(0)=\mathcal{Q}^j v_0$. As mentioned above, in the following theorem we only treat the non-localized setting with $\ell = \infty$ for illustration purposes. 

\begin{theorem}[higher-order convergence]\label{thm:waves:sd_error_eholod}
  Let~\Cref{ass:regularity} hold for some~$k\in\naturalnumbers$ and~$m\geq 2j + 4$ and set $\ell = \infty$. Further, let~$\BulkSolution$ and~$\msBulkSolution$ be the solutions to~\eqref{eq:variational_wave} and~\eqref{eq:eholod_variational_wave}, respectively. If~$j\geq\lceil\frac{k-1}{2}\rceil$, then 
  \begin{equation}\label{eq:waves:sd_error_eholod}
    \LtwoNorm{\partialt(\BulkSolution(t) - \msBulkSolution(t))} + \HlocNorm{(\BulkSolution(t) - \msBulkSolution(t))} \lesssim_\finalTime H^{k+1}
  \end{equation}
  for almost all~$t\in[0,T]$ and~$k \leq p+1$. 
\end{theorem}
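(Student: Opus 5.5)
The plan follows the structure of the proof of \Cref{thm:waves:sd_error_plod}. We use a standard energy argument in the conforming space $\coarse{\widetilde{V}}^j$, with the projection $\mathcal{R}$ replaced by the expansion operator $\mathcal{Q}^j$ from~\eqref{eq:defQ}. We write
\begin{equation*}
u - \msBulkSolution = (u - \mathcal{Q}^j u) + (\mathcal{Q}^j u - \msBulkSolution) \eqqcolon \rho + \theta,
\end{equation*}
where $\theta(t) \in \coarse{\widetilde{V}}^j$. Subtracting~\eqref{eq:eholod_variational_wave} from~\eqref{eq:variational_wave} and testing with $\coarse{\tilde v}\in\coarse{\widetilde{V}}^j$ gives
\begin{equation*}
(\partialt[2]\theta,\coarse{\tilde v}) + a(\theta,\coarse{\tilde v}) = -(\partialt[2]\rho,\coarse{\tilde v}) - a(\rho,\coarse{\tilde v}).
\end{equation*}
Since $\mathcal{Q}^j$ is not the $a$-orthogonal projection onto $\coarse{\widetilde{V}}^j$, the term $a(\rho,\cdot)$ does not vanish. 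We treat it by integrating by parts in time: test with $\partialt\theta$, integrate over $(0,t)$, and move the time derivative from $\partialt\theta$ onto $\rho$. The initial error vanishes by the choice $\theta(0)=\partialt\theta(0)=0$. We then apply Young's inequality and Gronwall's lemma. The result is
\begin{equation*}
\|\partialt\theta(t)\|_{\Ltwo}+\|\nabla\theta(t)\|_{\Ltwo}\lesssim_T \sup_{s\le t}\sum_{i=0}^{2}\|\nabla\partialt[i]\rho(s)\|_{\Ltwo}.
\end{equation*}
This uses Poincaré's inequality to pass from $L^2$ to the gradient. Since $\mathcal{Q}^j$ commutes with $\partialt$, we have $\partialt[i]\rho = \partialt[i]u-\mathcal{Q}^j\partialt[i]u$. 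Hence it suffices to prove $\|\nabla(w-\mathcal{Q}^j w)\|_{\Ltwo}\lesssim H^{k+1}$ for $w=\partialt[i]u(t)$, $i\le 2$. The energy needs up to time derivative order $2j+4$, which is where $m\ge 2j+4$ enters.

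The core step is an elliptic estimate for $e_j \coloneqq w-\mathcal{Q}^j w$, proved by induction on $j$. Each such $w$ solves $a(w,v)=(g-\partialt[2]w,v)$ with $g=\partialt[i]f$. First, $e_j\in W=\ker\coarse\Pi\cap\Hloc$. This holds because $w-\mathcal{R}w\in W$ and $\mathcal{D}$ maps into $W$; here I would define $\mathcal{D}$ on $\Ltwo$ with values in $W$, as in~\eqref{eq:additional_correction}. For $\varphi\in W$, the orthogonality~\eqref{eq:ortho} and the definition~\eqref{eq:additional_correctionGlobal} give the key identity
\begin{equation*}
a(\mathcal{D}^\nu\mathcal{R}\partialt[2\nu]w,\varphi) = -(\mathcal{D}^{\nu-1}\mathcal{R}\partialt[2\nu]w,\varphi),\qquad \nu\ge1.
\end{equation*}
Consequently
\begin{equation*}
a(e_j,\varphi) = (g,\varphi) - \Bigl(\partialt[2]w - \sum_{\nu=0}^{j-1}\mathcal{D}^{\nu}\mathcal{R}\partialt[2\nu+2]w,\varphi\Bigr) = (g,\varphi) - (e_{j-1}[\partialt[2]w],\varphi),
\end{equation*}
where $e_{j-1}[z] \coloneqq z-\mathcal{Q}^{j-1}z$ is the expansion error applied to $z$. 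The series telescopes exactly, which is the heart of the construction.

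Testing with $\varphi=e_j$ and using $\coarse\Pi\varphi=0$ together with~\eqref{eq:Ltwo_projection_approximation}, the first term is bounded by $H^{k+1}\|g\|_{H^k(\coarse\Mesh)}\|\nabla e_j\|$. For the second term, $e_{j-1}[\partialt[2]w]\in W$ satisfies the Poincaré-type bound $\|z\|_{\Ltwo}\lesssim H\|\nabla z\|_{\Ltwo}$ for $z\in W$. Together with the same bound for $e_j$, this gives a factor $H^2\|\nabla e_{j-1}[\partialt[2]w]\|$. This yields the recursion
\begin{equation*}
\|\nabla e_j[w]\|\lesssim H^{k+1}\|g\|_{H^k(\coarse\Mesh)} + H^2\|\nabla e_{j-1}[\partialt[2]w]\|,
\end{equation*}
with base case $\|\nabla e_0[z]\|\lesssim H^{k+1}\|\cdot\|+H^2\|\nabla\partialt[2]z\|$ from~\eqref{eq:noHO}. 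Unrolling gives $H^{k+1}+H^{2j+2}\|\nabla\partialt[2j+2]w\|$. This is $\lesssim H^{k+1}$ once $2j+2\ge k+1$, i.e., $j\ge\lceil\frac{k-1}{2}\rceil$. The required norms $\|\nabla\partialt[2j+2+i]u\|$ with $i\le2$ and $\|\partialt[\cdot]f\|_{H^k(\coarse\Mesh)}$ are bounded uniformly in $t$ by \Cref{ass:regularity} and \Cref{rem:compatibility}, using $m\ge 2j+4$ and the stability of the time-differentiated wave equations with $\ConstantInit$.

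I expect the main obstacle to be the energy step, because $\mathcal{Q}^j$ is not a Ritz projection. The term $a(\rho,\partialt\theta)$ must be handled by integration by parts in time, giving $a(\partialt\rho,\theta)$ plus boundary terms. This is why up to two extra time derivatives of $\rho$, and hence $m\ge2j+4$, are needed. An alternative is to compare with the $a$-orthogonal projection onto $\coarse{\widetilde{V}}^j$, whose error is bounded by the best approximation and thus by $\|\nabla\rho\|$. I would first try this alternative, since it is cleaner: $\theta$ is then driven only by $(\partialt[2]\rho,\cdot)$, and the remaining work is the inductive elliptic estimate above.
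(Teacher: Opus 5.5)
Your proposal is correct and follows the paper's route. You reduce the semi-discrete error to the expansion error $u-\mathcal{Q}^ju$ and its time derivatives; here you spell out the paper's ``standard arguments'' and correctly note that $\mathcal{Q}^j$ is not the $a$-orthogonal projection onto $\coarse{\widetilde{V}}^j$, which is why $\partial_t\rho$ and $\partial_t^2\rho$ enter, consistent with $m\geq 2j+4$. You then test with $\psi\in\ker\Pi_H$ and use the defining relation of $\mathcal{D}$ together with the $a$-orthogonality of $\mathcal{R}$ to trade two time derivatives for a factor $H^2$, as the paper does.

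The only difference is bookkeeping, and your version is equivalent. The paper adds and subtracts the auxiliary $(j+1)$-th correction and recurses separately on $\partial_t^2\psi$ and $\mathcal{D}^j\mathcal{R}\partial_t^{2j+2}u$. You telescope directly to $e_{j-1}[\partial_t^2 u]=\partial_t^2\psi+\mathcal{D}^j\mathcal{R}\partial_t^{2j+2}u$ and induct on $j$, which makes the regularity count explicit.
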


\begin{proof}[Sketch of the proof]
  The proof uses similar arguments as in~\cite{KalKMW26} and is adapted here to the wave equation, see also~\cite{Kru26}. Note that by standard arguments therein, the error $\BulkSolution(t) - \msBulkSolution(t)$ can be bounded by the error~$u(t) - \mathcal{Q}^ju(t)$ of the expansion~\eqref{eq:defQ} up to a generic constant. We now show that the expansion~$\mathcal{Q}^ju$ actually converges with an improved rate. 

  Let~$t\in[0,T]$ be arbitrary but fixed. By definition of the operators~$\mathcal{R}$ and~$\mathcal{D}$, we have 
  \begin{equation}\label{eq:waves:eholod_mapping_error_kernel_space}
    \psi(t) \coloneqq \BulkSolution(t) - \mathcal{Q}^ju(t)\in \Hloc \cap \kernel \coarse{\Pi}. 
  \end{equation}
  In the following, we omit the argument~$t$ for ease of presentation. From direct calculations, we obtain 
  \begin{equation}\label{eq:waves:enriched_ms_mapping_estimate}
    \begin{aligned}
      \|\nabla\psi\|_{\Ltwo}^2 \lesssim \energyIP{\psi}{\psi} = \LtwoIP{\RHS}{\psi} - \LtwoIP{\partialt[2]\BulkSolution}{\psi} - \energyIP{\mathcal{Q}^j\BulkSolution}{\psi}. 
    \end{aligned}
  \end{equation}
  Since~$\psi\in\ker\coarse\Pi$, the first term on the right-hand side can be optimally bounded by
  \begin{equation}\label{eq:waves:prot_enriched_mapping_rhs_estimate}
    \begin{aligned}
      \LtwoIP{\RHS}{\psi} &= \LtwoIP{(\identity - \coarse\LtwoProjection)\RHS}{(\identity - \coarse\LtwoProjection)\psi} \lesssim H^{k+1} \Norm{\RHS}{\Sobolevtwo{k}[\coarse\Mesh]} \HlocNorm{\psi}.
    \end{aligned}
  \end{equation}
  In the next step, we use the construction of~$\mathcal{D}$ and the definition of the mapping~$\mathcal{Q}^j$ in two ways. First, we  introduce the~$(j+1)$-th enriched correction as an auxiliary term in the analysis. In practice, it does not need to be computed, but sufficient temporal regularity of the solution is required. The norm of the $(j+1)$-th correction is small, as each enrichment iteration introduces a multiplicative scaling factor of order~$\mathcal{O}(H^2)$. This observation is very useful in the analysis that follows. %
  Second, we employ the fact that~$\mathcal{D}$ is constructed as an appropriate operator to -- loosely speaking -- trade time derivatives for spatial derivatives. %
  To make this concrete, we focus on the third term on the right-hand side of~\eqref{eq:waves:enriched_ms_mapping_estimate}. By the definitions of~$\mathcal{Q}^j$ and $\mathcal D$ and since~$\mathcal{R}$ maps into functions that are~$a$-orthogonal to~$\psi$, we have 
  \begin{multline}\label{eq:waves:prot_enriched_mapping_split}
  \begin{aligned}
      \energyIP{\mathcal{Q}^ju}{\psi} &= \energyIP{\sum_{\nu=0}^j \mathcal{D}^\nu\mathcal{R}(\partialt[2\nu]u)}{\psi} \\&= \energyIP{\sum_{\nu=1}^{j+1} \mathcal{D}^\nu\mathcal{R}(\partialt[2\nu]u)}{\psi} - \energyIP{\mathcal{D}^{j+1}\mathcal{R}(\partialt[2(j+1)]u)}{\psi} \\&
      = - \LtwoIP{\partialt[2]\mathcal{R}u}{\psi} - \LtwoIP{\partialt[2] \sum_{\nu=2}^{j+1} \mathcal{D}^{\nu-1}\mathcal{R}(\partialt[2(\nu-1)]u)}{\psi}\\&\qquad\qquad + \LtwoIP{\partialt[2] \mathcal{D}^{j}\mathcal{R}(\partialt[2j]u)}{\psi}.
  \end{aligned}
  \end{multline}
  Plugging~\eqref{eq:waves:prot_enriched_mapping_rhs_estimate} and~\eqref{eq:waves:prot_enriched_mapping_split} back into~\eqref{eq:waves:enriched_ms_mapping_estimate}, we get
  \begin{subequations}\label{eq:mapping_recursion}
  \begin{equation}
      \begin{aligned}
          \|\nabla\psi\|_{\Ltwo}^2 &\lesssim H^{k+1} \Norm{\RHS}{\Sobolevtwo{k}[\coarse\Mesh]} \HlocNorm{\psi} - \LtwoIP{\partialt[2]\BulkSolution}{\psi} + \LtwoIP{\partialt[2]\mathcal{R}u}{\psi}\\
          &\quad + \LtwoIP{\partialt[2] \sum_{\nu=2}^{j+1} \mathcal{D}^{\nu-1}\mathcal{R}(\partialt[2(\nu-1)]u)}{\psi} - \LtwoIP{\partialt[2] \mathcal{D}^{j}\mathcal{R}(\partialt[2j]u)}{\psi},
      \end{aligned}
  \end{equation}
  and using~$\psi = \psi - \coarse\Pi\psi$ since $\psi\in\ker\coarse\Pi$, we obtain 
  \begin{equation}
      \begin{aligned}
          \|\nabla\psi\|_{\Ltwo}^2 &\lesssim H^{k+1} \Norm{\RHS}{\Sobolevtwo{k}[\coarse\Mesh]} \HlocNorm{\psi}\\
          &\quad + H^2\LtwoNorm{\nabla \partialt[2] (u - \mathcal{R}u - \sum_{\nu=2}^{j+1} \mathcal{D}^{\nu-1}\mathcal{R}(\partialt[2(\nu-1)]u))} \HlocNorm{\psi}\\
          &\quad + H^2\LtwoNorm{\nabla \partialt[2] \mathcal{D}^{j}\mathcal{R}(\partialt[2j]u)} \HlocNorm{\psi}.
      \end{aligned}
  \end{equation}
  \end{subequations}
  Now, we can apply a recursion for the terms in the last two lines. We observe that the first term in the second line is of the same form as~$\HlocNorm{\psi}$, and with the sufficient regularity assumed in the theorem, this term can be estimated analogously to~\eqref{eq:waves:enriched_ms_mapping_estimate}--\eqref{eq:mapping_recursion}. 
  Similarly, the first term in the last line can be recursively bounded by using the definition of~$\mathcal{D}$ and the fact that $\mathcal D$ maps into~$\ker\coarse\Pi$. %
  Overall,  by repeating the above arguments we eventually obtain
  \begin{equation}
      \HlocNorm{\psi} \lesssim H^{k+1} + H^{2j+2}.
  \end{equation}
  The assertion follows by the choice~$j\geq\lceil\frac{k-1}{2}\rceil$. 
\end{proof}

\subsection{Localized enrichments}\label{sec:enriched_localization}
\Cref{thm:waves:sd_error_eholod} only considers the error of the semi-discrete method for $\ell = \infty$. As discussed in \Cref{subsec:localization}, localization is essential for ensuring the computational feasibility of  multiscale methods. 
We may use the practically feasible enriched localized multiscale space defined in~\eqref{eq:enriched_multiscale_space} because of the following localization result, which is a direct consequence of~\cite[Thm.~3.3~\&~Rem.~3.4]{KalKMW26}. The proof of the result is highly technical, and we therefore  only provide some intuition in \Cref{rem:loc} below.

\begin{corollary}[localization error of enrichments]\label{thm:localization}
    Let~$\ell\in\naturalnumbers$ with~$H^2\ell^{{d+1}}\lesssim 1$ and $\nu \in \mathbb{N}_0$. For any function $v\in\Hloc$, we have 
    \begin{equation*}
        \|{\nabla ((\mathcal{D}^{{[\ell]}})^\nu - \mathcal{D}^\nu)\mathcal{R}^{[\ell]} v} \|_{L^2(\Domain)}\lesssim \exp^{-C\ell}\|{\nabla v}\|_{L^2(\Domain)}. 
    \end{equation*}
\end{corollary}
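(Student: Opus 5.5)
The plan is to prove the estimate by induction on $\nu$, using a telescoping identity for the difference of powers. Here $(\mathcal{D}^{[\ell]})^\nu \mathcal{R}^{[\ell]} v$ is understood as $\sum_{K}(\mathcal{D}^{[\ell]}_K)^\nu\mathcal{R}^{[\ell]}_K v$, i.e., the element-wise contributions are localized on $\Nb[\ell]$.

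For $\nu=0$ the difference vanishes. For general $\nu$ and each $K$ we write
\begin{equation*}
(\mathcal{D}^{[\ell]}_K)^\nu - \mathcal{D}^\nu = \sum_{i=0}^{\nu-1} \mathcal{D}^{\nu-1-i}\,(\mathcal{D}^{[\ell]}_K - \mathcal{D})\,(\mathcal{D}^{[\ell]}_K)^{i},
\end{equation*}
applied to $\mathcal{R}^{[\ell]}_K v$. Two ingredients are then needed.

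\emph{Stability and $H^2$-smallness of $\mathcal{D}$.} For $w\in\ker\coarse\Pi\cap\Hloc$, testing \eqref{eq:additional_correctionGlobal} with $\mathcal{D}w$ and using $\mathcal{D}w\in\ker\coarse\Pi$ together with \eqref{eq:Ltwo_projection_approximation} gives
\begin{equation*}
\|\nabla\mathcal{D}w\|^2 \lesssim |(w-\coarse\Pi w,\mathcal{D}w-\coarse\Pi\mathcal{D}w)| \lesssim H^2\|\nabla w\|\,\|\nabla\mathcal{D}w\|,
\end{equation*}
so $\|\nabla\mathcal{D}w\|\lesssim H^2\|\nabla w\|$. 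For general $w\in\Ltwo$ one only gets $\|\nabla \mathcal{D}w\|\lesssim H\|w\|$. The same bounds hold for the local operators $\mathcal{D}^{[\ell]}_K$, with constants independent of $\ell$. Consequently, all outer factors $\mathcal{D}^{\nu-1-i}$ in the telescoping sum are uniformly bounded in the energy norm. The inner factors $(\mathcal{D}^{[\ell]}_K)^i\mathcal{R}^{[\ell]}_Kv$ are bounded by $\|\nabla \mathcal{R}^{[\ell]}_K v\|$. This norm is in turn controlled by $\|\nabla v\|_{L^2(\Nb[][K])}$, using the stability of the element-wise saddle point problem \eqref{eq:saddle_point} (the data are $a_K(\coarse{\mathcal I}v,\cdot)$ and $c_K(v,\cdot)$, both local and $H^1$-stable).

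\emph{One-step localization error.} For $g$ supported in $\Nb[\ell]$ (here $g=(\mathcal{D}^{[\ell]}_K)^i\mathcal{R}^{[\ell]}_Kv$), we need $\|\nabla(\mathcal{D}^{[\ell]}_K-\mathcal{D})g\|\lesssim \exp(-C\ell)\|\nabla g\|$ up to polynomial factors in $\ell$. The first step is to show exponential decay of $\mathcal{D}g$ away from $\Nb[\ell]$, by the standard cutoff argument as in Theorem~\ref{thm:decay}. Take a cutoff $\eta$ vanishing on $\Nb[\ell]$ and equal to one outside $\Nb[\ell+m]$, and test with $(\identity-\mathcal{B})(\eta\mathcal{D}g)$, where $\mathcal{B}$ is a local bubble correction restoring $\ker\coarse\Pi$. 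Since $g=0$ there, the right-hand side vanishes, and one gets a geometric contraction in rings. Céa-type arguments in the subspace $H^1_0(\Nb[\ell])\cap\ker\coarse\Pi$, with a cutoff of $\mathcal{D}g$ near $\partial\Nb[\ell]$, then give the one-step bound. There is a problem here: the support of $g$ already fills $\Nb[\ell]$, so decay must be measured from $K$ itself. I would therefore propagate decay through the iteration. Namely, $(\mathcal{D}^{[\ell]}_K)^i\mathcal{R}^{[\ell]}_Kv$ decays exponentially from $K$ with rate depending on $i$, with a loss of polynomial factors $\ell^{d}$ counting the elements in rings. The assumption $H^2\ell^{d+1}\lesssim1$ is exactly what absorbs these factors: each application contributes a factor $H^2$ against the ring-counting polynomial in $\ell$.

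The final step is summation over $K$. Adding the element-wise errors and using the finite overlap of the patches $\Nb[\ell]$ (overlap $\lesssim\ell^d$) gives
\begin{equation*}
\|\nabla\textstyle\sum_K e_K\|^2\lesssim \ell^d\sum_K\|\nabla e_K\|^2 .
\end{equation*}
The polynomial factor is then absorbed into $\exp(-C\ell)$ by reducing $C$, and $\sum_K\|\nabla v\|^2_{L^2(\Nb[][K])}\lesssim\|\nabla v\|^2$ closes the argument.

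The main obstacle is the propagation of exponential decay through the repeated application of the localized $\mathcal{D}^{[\ell]}_K$ on a fixed patch, without enlarging it. Each application spreads mass across the patch, and one must show that the decay profile measured from $K$ survives with only polynomial losses. I would try to prove a weighted decay estimate with weight $\exp(c\,\mathrm{dist}(x,K)/H)$ that is stable under $\mathcal{D}$, using the $H^2$ contraction to dominate the weight's gradient.
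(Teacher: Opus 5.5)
There are two genuine gaps in your proposal. Your overall architecture is the one sketched in Remark~\ref{rem:loc}: C\'ea's lemma in $H^1_0(\Nb[\ell])\cap\kernel\coarse{\Pi}$, decay measured from $K$ rather than from the patch and propagated through the iterates, and polynomial losses absorbed by $H^2\ell^{d+1}\lesssim1$. Your telescoping identity is correct, and so are the bounds $\|\nabla\mathcal{D}w\|\lesssim H^2\|\nabla w\|$ (valid for every $w\in\Hloc$, which you need for $i=0$) and $\|\nabla\mathcal{D}w\|\lesssim H\|w\|$.

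The first gap is the summation over $K$, which fails as written. The element error $e_K=((\mathcal{D}^{[\ell]}_K)^\nu-\mathcal{D}^\nu)\mathcal{R}^{[\ell]}_Kv$ contains the globally supported function $\mathcal{D}^\nu\mathcal{R}^{[\ell]}_Kv$, so there is no finite overlap to exploit. The usual LOD substitute is $a$-orthogonality of $e_K$ to all $w\in\Hloc\cap\kernel\coarse{\Pi}$ vanishing on $\Nb[\ell]$. This also breaks for $\nu\geq2$, because $a(\mathcal{D}^\nu g,w)=-(\mathcal{D}^{\nu-1}g,w)$ and $\mathcal{D}^{\nu-1}g$ is global. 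The remedy is to sum after telescoping,
\[
\sum_K e_K=\sum_{i=0}^{\nu-1}\mathcal{D}^{\nu-1-i}\sum_K f_{K,i},\qquad f_{K,i}\coloneqq(\mathcal{D}^{[\ell]}_K-\mathcal{D})(\mathcal{D}^{[\ell]}_K)^i\mathcal{R}^{[\ell]}_Kv,
\]
and to use that the outer powers of $\mathcal{D}$ are uniformly bounded. Each $f_{K,i}$ has data supported in the patch, so it is $a$-orthogonal to every $w\in\Hloc\cap\kernel\coarse{\Pi}$ vanishing on $\Nb[\ell]$. Testing with $F_i=\sum_{K'}f_{K',i}$ minus a cut-off of $F_i$ (bubble-corrected on the layer $\Nb[\ell+1]\setminus\Nb[\ell]$) then yields $\|\nabla F_i\|^2\lesssim\ell^d\sum_K\|\nabla f_{K,i}\|^2$.

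The second gap is that the decisive step is only announced: decay from $K$ of $(\mathcal{D}^{[\ell]}_K)^i\mathcal{R}^{[\ell]}_Kv$ and of $\mathcal{D}$ applied to it. The paper, via \cite{KalKMW26}, obtains this differently from what you suggest. The data are split element-wise, and the piece on $G\subset\Nb[\ell]$ is corrected on a sub-patch $\Nb[\lambda][G]\subset\Nb[\ell]$, with $\lambda$ shrinking as $G$ moves away from $K$. The decay of the data on $G$ is multiplied with the decay of the correction away from $G$, and Galerkin optimality then passes to the full-patch correction. Summing $\sim\ell^d$ element contributions by the triangle inequality in every application is where $H^2\ell^{d+1}\lesssim1$ enters.

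Your weighted route looks workable and cleaner. Take $\rho=\exp(c\,\mathrm{dist}(\cdot,K)/H)$ and $u=\mathcal{D}g$, and test with $\rho^2u$ minus an element-wise bubble correction restoring $\kernel\coarse{\Pi}$. Using $\|u\|_{L^2(T)}\lesssim H\|\nabla u\|_{L^2(T)}$ on each element, the commutator terms are $O(c)\|\rho\nabla u\|^2$. Hence $\|\rho\nabla\mathcal{D}g\|\lesssim H\|\rho g\|$ for small $c$, and the same holds for $\mathcal{D}^{[\ell]}_K$. The constants are independent of $\ell$, so no polynomial factors accumulate over the iterations.

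Two things remain for you to supply. First, you need a starting weighted bound for $\mathcal{R}^{[\ell]}_Kv$ with the same $\rho$, i.e.\ decay at a rate exceeding $c$; bounding $\rho$ trivially on $\Nb[\ell]$ costs $e^{c\ell}$. Second, your bound $\|\nabla\mathcal{R}^{[\ell]}_Kv\|\lesssim\|\nabla v\|_{L^2(\Nb[][K])}$ is really the stability of $\mathcal{K}^{[\ell]}_K$. The element piece of $\mathcal{R}^{[\ell]}=\coarse{\mathcal{I}}-\mathcal{K}^{[\ell]}$ also carries a share of $\coarse{\mathcal{I}}v$, which is only bounded by $H^{-1}\|v\|_{L^2}$-type quantities. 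This is harmless, because the first application of $\mathcal{D}$ gains a factor $H$ through $\|\nabla\mathcal{D}g\|\lesssim H\|g\|$. It does mean the first step has to be run with $L^2$-bounds.
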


\begin{remark}\label{rem:loc}
  As shown in~\cite{Mai21}, the operator~$\mathcal{R}$ has an exponential decay property in the sense that if the operator~$\mathcal{R}$ is applied to a function~$v$ with support only on an element~$K\in\mathcal{T}_H$, the norm of the image~$\mathcal{R} v$ decays exponentially fast away from~$K$. Such results are well-known in the context of LOD-type methods, see~\cite{MalP20,Altmann2021}. Thus, restricting computations to a small patch around the element~$K$  only leads to an exponentially small error. %
  This property, however, is crucially tied to the fact that the function~$v$ is supported locally on a single element. This means that if we were to apply such a localization result to the enriched correction~$\mathcal{D}^{[\ell]}_Kv$ once, we would instead obtain a patch of size~$2\ell$ for the same localization error due to the right-hand side already having support on the patch~$\Nb[\ell]$, see~\eqref{eq:additional_correction}. Consequently, the support grows rapidly with~$\nu$, making multiple enrichment computationally infeasible. 
  The effect is also illustrated in \Cref{fig:localization}~(left), where the original patch~$\omega_0 = \Nb[\ell]$ is shown. Applying similar localization results to elements~$G_1\in\omega_0$ produces patches~$\Nb[\ell][G_1]\subset \Nb[2\ell]$, whose union forms to a patch $\omega_1$ of increased size, and so forth. %
  
  In fact, repeated enrichment can be computed without enlarging~$\Nb[\ell]$, because  the correction is applied to a function that itself exhibits  decay. In essence, this means that correcting the contribution of a functions~$\mathcal{R}^{[\ell]}_Kv$ in some element $G_1 \in \Nb[\ell]$ only requires computations in $\Nb[\lambda][G_1]\subset\Nb[\ell]$ for some $\lambda \leq \ell$ because the decay of~$\mathcal{R}^{[\ell]}_Kv$ and $\mathcal{D}^{[\lambda]}_{G_1}\mathcal{R}^{[\ell]}_Kv$ can be combined. Note that $\lambda$ can be decreased with an increasing distance (in terms of layers of elements) between $G_1$ and $K$. This argument can be repeated also to correct the contribution of~$\mathcal{D}^{[\lambda]}_{G_1}\mathcal{R}^{[\ell]}_Kv$ on some element $G_2 \in \Nb[\lambda][G_1]$ and generally repeated multiple times. This is also illustrated in \Cref{fig:localization}~(right). Since the required choice of $\lambda$ ensures that all occurring patches are sub-patches of the original supports $\Nb[\ell]$, an element-wise compositing is not necessary and the additional corrections can be computed for the whole patch directly as defined in~\eqref{eq:additional_correction}. 

  While the above reasoning may appear intuitive, the corresponding analysis is rather lengthy and technical. We therefore refer to~\cite{KalKMW26,Kru26} for further details. 
\end{remark}

  \begin{figure}
    \begin{subfigure}{.49\textwidth}
    \centering
    \begin{tikzpicture} [scale=0.5]
        \draw[line width=.8mm,fill=black!25!white] (0,0) rectangle (12,12);
        \draw[line width=.8mm,fill=black!25!white] (1,2) rectangle (10,11);
        \draw[line width=.8mm,fill=black!25!white] (3,4) rectangle (8,9);
        \draw[fill=black!50!white] (2,6) rectangle (7,11);
        \draw[fill=black!75!white] (0,7) rectangle (5,12);
        \draw[fill=black!100!white] (5,6) rectangle (6,7);
        \draw[fill=black!100!white] (4,8) rectangle (5,9);
        \draw[fill=black!100!white] (2,9) rectangle (3,10);
        \draw[line width=0.2mm, draw=black, fill=black!20!white] (0,0) grid  (12,12);
        \node[white] at (5.5, 6.5) {$\pmb{K}$};
        \node[white] at (4.5, 8.5) {$\pmb{G_1}$};
        \node[white] at (2.5, 9.5) {$\pmb{G_2}$};
        
        \node[black] at (7.5, 4.5) {$\pmb{\omega_0}$};
        \node[black] at (9.5, 4.5) {$\pmb{\omega_1}$};
        \node[black] at (11.5, 4.5) {$\pmb{\omega_2}$};
    \end{tikzpicture}%
    \end{subfigure}%
    \hfill%
    \begin{subfigure}{.49\textwidth}
        \centering
      \begin{tikzpicture} [scale=0.5]
        \draw[line width=.8mm,fill=black!25!white] (1,2) rectangle (10,11);
        \draw[fill=black!50!white] (2,6) rectangle (7,11);
        \draw[fill=black!75!white] (2,8) rectangle (5,11);
        \draw[fill=black!100!white] (5,6) rectangle (6,7);
        \draw[fill=black!100!white] (4,8) rectangle (5,9);
        \draw[fill=black!100!white] (3,9) rectangle (4,10);
        \draw[line width=0.2mm, draw=black, fill=black!20!white] (0,0) grid  (12,12);
        \node[white] at (5.5, 6.5) {$\pmb{K}$};
        \node[white] at (4.5, 8.5) {$\pmb{G_1}$};
        \node[white] at (3.5, 9.5) {$\pmb{G_2}$};
      \end{tikzpicture}
    \end{subfigure}
    \caption[Localization]{\small Illustration of two localization strategies for iterative applications of elliptic operators: a classical localization strategy (left) and the more involved localization used in~\cite{KalKMW26} that yields much smaller patches (right). } \label{fig:localization}
  \end{figure}


\section{Numerical examples}\label{sec:4}

In this section, we present some numerical examples to support our theoretical results. In time-dependent problems, the overall accuracy of the method depends on both the spatial and temporal discretization errors. In our setting, higher-order time discretizations appear natural to match the high accuracy of the enriched spatial multiscale space. Therefore, we consider the Rosenbrock-Wanner-Wolfbrandt (ROW) method applied to the semi-discrete formulation \eqref{eq:eholod_variational_wave}, 
which can be written as a system of second-order ordinary differential equations
\begin{equation}
    \mathbf{M}\,\frac{\mathrm{d}^2\wt{u}}{\mathrm{d}t^2} = -\mathbf{K}\,\wt{u} + \mathbf{f},\label{eq:ODE1}
\end{equation}
where $\mathbf{M}$ and $\mathbf{K}$ are the mass and stiffness matrix for the space $\coarse{\widetilde{V}^j}$, respectively, 
and~$\tilde{\mathbf{u}}$ the factors for the basis expansions of the function $\tilde u_H$ in~\eqref{eq:eholod_variational_wave}. As usual, $\mathbf{f}$ includes the integrals of $f$ with all the basis functions. 
In order to apply the ROW scheme, we write~\eqref{eq:ODE1} as a system of coupled first-order equations and obtain
\begin{equation}
   \begin{bmatrix}
        \mathbf{M} & \mathbf{0} \\\\
        \mathbf{0} & \mathbf{I}
   \end{bmatrix}\begin{bmatrix}
        \dfrac{\mathrm{d}\wt{v}}{\mathrm{d}t}\\\\
        \dfrac{\mathrm{d}\wt{u}}{\mathrm{d}t}
    \end{bmatrix} = 
    \begin{bmatrix}
        \mathbf{0} & -\mathbf{K} \\\\
        \mathbf{I} & \mathbf{0}
    \end{bmatrix}\begin{bmatrix}
        \wt{v}\\\\
        \wt{u}
    \end{bmatrix} + 
    \begin{bmatrix}
        \mathbf{0}\\\\
        \mathbf{f}
    \end{bmatrix},\label{eq:ODE2}
\end{equation}
which can be written more compactly as
\begin{equation}
   \wt{M}\, \frac{\mathrm{d}\wt{w}}{\mathrm{d}t} = \wt{K} \, \wt{w} + \wt{f}, \label{eq:ODE3}
\end{equation}
where
\begin{equation*} 
\wt{w} = \begin{bmatrix}
        \wt{v}\\\\
        \wt{u}
    \end{bmatrix}, \quad \wt{M} = \begin{bmatrix}
    \mathbf{M} & \mathbf{0} \\\\
    \mathbf{0} & \mathbf{I}
\end{bmatrix}, \quad
\wt{K} = \begin{bmatrix}
    \mathbf{0} & -\mathbf{K} \\\\
    \mathbf{I} & \mathbf{0}
\end{bmatrix},\quad \wt{f} = \begin{bmatrix}
        \mathbf{0}\\\\
        \mathbf{f}
    \end{bmatrix}.
\end{equation*}
An $s$-stage ROW scheme for solving~\eqref{eq:ODE3} is defined in~\citep{steinebach2023construction} and given by
\begin{align*}
    \left(\wt{M} - \tau\gamma \wt{K}\,\right)k_i &= \tau f\left(t_n + \alpha_i \tau, \,\wt{w}^n + \sum_{j=1}^{i-1} \alpha_{ij}k_j\right) + \tau\, \wt{K} \sum_{j=1}^{i-1} \gamma_{ij}k_j + \tau^2 \gamma_i\, \partialt{\wt{f}(t_n)},\\
    i &= 1, \cdots, s,\nonumber\\
    \wt{w}^{n+1} &= \wt{w}^{n} + \sum_{i=1}^s b_i k_i,
\end{align*}
where $\tau$ is the step-size and $\wt{w}^n \approx \wt{w}(t_n)$. The coefficients of the method are $\gamma, \alpha_{ij}, \gamma_{ij}$ and the $b_i$ define the weights. Moreover, we have $\alpha_i = \sum_{j=1}^{i-1} \alpha_{ij}$ and $\gamma_i = \gamma + \sum_{j=1}^{i-1} \gamma_{ij}$ with $\alpha_{ij} + \gamma_{ij} = 0$ for $i<j$. 

In this work, we use \texttt{Rodas5P}, which is an implicit 5th-order stiffly-stable ROW method, see~\cite{steinebach2023construction} for the precise construction. Note that the scheme is L-stable. Our numerical examples are based on the Julia programming language and the code is available at \href{https://github.com/Balaje/MsFEM.jl}{\texttt{https://github.com/Balaje/MsFEM.jl}}. The finite element computations are based on the \texttt{Gridap.jl} package (see~\cite{Verdugo2022,Badia2020}) and the temporal discretization makes use of the above-mentioned \texttt{Rodas5P} scheme implemented in the package \texttt{OrdinaryDiffEq.jl} (see~\cite{rackauckas2017differentialequations}). To solve the linear systems in every time step, the default LU factorization in Julia is used. All the errors are measured in the energy norm $\|\cdot\|_a \coloneqq \sqrt{a(\cdot,\cdot)}$ at the final time $T=1$.  

\begin{example}\label{example:1}
    We consider a two-dimensional example on the unit square with a piecewise constant coefficient with random values between 0.1 and 1 on a mesh of scale $\varepsilon = 2^{-6}$. Further, we use the right-hand side 
    \begin{equation*} 
    f(x,t) = \sin(\pi x_1)\sin(\pi x_2)\sin^7(t),
    \end{equation*}
    and zero initial conditions. The multiscale basis functions and the corrections are computed locally in a first-order finite element space based on a uniform Cartesian mesh with mesh size $h=2^{-7}$. A global version of the fine space is used to compute a reference solution, combined with an explicit fourth-order Runge-Kutta-Nystr\"{o}m scheme with~$\tau = 2^{-9}$ (see~\cite{MONTIJANO2024115533}). On the coarse-scale, we use the \texttt{Rodas5P} scheme with $\tau = 2^{-9}$ for the spatial convergence tests. For this example, we use the stabilization from~\cite{HauLM25}. We compare the error of the enriched higher-order multiscale method with optimally chosen correction level $j= \lceil p/2 \rceil$ (see \Cref{thm:waves:sd_error_eholod}) for selected choices of the localization parameter~$\ell$. The results are shown in \Cref{fig:4.1} for $p=1,3$, where we also present for comparison the results for smaller values of $j$ and $\ell = \infty$. In both cases, we see that the version without enrichments ($j=0$) performs worse than the versions with enrichments. Since increasing $j$ leads to an improvement by two order of $H$ directly (which is optimal for even values of $p$), it appears that the choice of $j$ can even be slightly smaller than theoretically predicted for the presented odd values of~$p$. In particular, for $p=3$ the choice $j=1$ already leads to a close-to-optimal convergence behavior, with a theoretically capped convergence rate at order 4. However, $j=0$ is clearly insufficient here to obtain optimal orders of convergence. We emphasize that this is only an effect that is observed for odd values of $p$, as even choices require the theoretically predicted choice of $j$, see, e.g.,~\cite[Fig.~3.7]{Kru26}.
\end{example}

\begin{figure}
\begin{subfigure}{0.49\textwidth}
    \includegraphics[width=\textwidth]{2d-localization-p-1}
\end{subfigure}%
\begin{subfigure}{0.49\textwidth}
    \includegraphics[width=\textwidth]{2d-localization-p-3}
\end{subfigure}
\caption{Energy errors for Example~\ref{example:1} with different localization parameters for $p=1$ (left) and $p=3$ (right). The stabilization strategy proposed in \cite{HauLM25} is used in both cases.}
\label{fig:4.1}
\end{figure}

\begin{example}\label{example:2}
    In our second example, we consider the unit interval and a piecewise constant coefficient with random values between 0.1 and 1 on a mesh of size~$\varepsilon = 2^{-8}$. We choose the right-hand side
    \begin{equation*} 
    f(x,t) = \left(x + \sin(\pi x)\right)\sin^7(t),
    \end{equation*}
    and zero initial conditions. The reference scale for the computation of the multiscale basis functions, the corrections, and the reference solution is set to~$h=2^{-13}$. As before, the reference solution is computed with an explicit fourth-order Runge-Kutta-Nystr\"{o}m scheme, now with~$\tau = 2^{-13}$. On the coarse-scale, we use the \texttt{Rodas5P} scheme with $\tau = 2^{-9}$ for the spatial convergence tests. The results are shown in \Cref{fig:4.2}. In this example, we compare three different localization strategies to set up the non-enriched multiscale space: the suboptimal strategy in the original work~\cite{Mai21}, the improved version based on bubbles in~\cite{DonHM23}, and the generalized variant in~\cite{HauLM25}. We present the errors for the two polynomial degrees $p=1$ and $p=3$ with the corresponding optimal correction levels~$j$, see~\Cref{thm:waves:sd_error_eholod}.
    We observe that all strategies show the theoretically predicted convergence behavior, provided that the localization parameter~$\ell$ is chosen sufficiently large. However, the required choice of $\ell$ depends on the localization strategy: while the constructions in~\cite{DonHM23} and~\cite{HauLM25} behave very similarly, the suboptimal strategy performs significantly worse, partially by almost two orders of magnitude. For completeness, \Cref{fig:4.3} shows the minimally required choices of $\ell$ for the localization strategy of~\cite{HauLM25} for the polynomial degrees  $p=2,3,4$. 
    Note that for $p=4$ and $H = 2^{-4},2^{-5}$, we observe a stagnation of the error, 
    which is not related to the localization error. Instead, as we approach machine precision, numerical errors provide a threshold for the size of the error. Such effects can be mitigated by using quad-precision arithmetic as demonstrated in~\cite[Fig.~4.3 (right)]{KalKMW26}, although this is practically less relevant since the errors are already very small.
\end{example}

\begin{figure}
\begin{subfigure}{0.49\textwidth}
    \includegraphics[width=\textwidth]{1d-localization-p-1}
\end{subfigure}%
\begin{subfigure}{0.49\textwidth}
    \includegraphics[width=\textwidth]{1d-localization-p-3}
\end{subfigure}
\caption{Energy errors for the enriched multiscale method applied to Example~\ref{example:2} with $p=1,\, j=1$ (left) and $p=3,\, j=2$ (right), different localization parameters, and different localizations strategies.}
\label{fig:4.2}
\end{figure}

\begin{example}\label{example:3}
    Finally, we consider an example showing the optimal temporal convergence behavior of the \texttt{Rodas5P} scheme, when used together with the enriched higher-order multiscale strategy. For this example, we consider the settings from Examples~\ref{example:1}~and~\ref{example:2}, respectively. We fix the spatial discretization with the choices 
    \begin{align*}
        &h = 2^{-13},\, H=2^{-4},\, \ell = \infty,\, p = 3,\, j = 2,\quad \text{for 1D,}\\
        &h = 2^{-7},\, H=2^{-3},\, \ell = \infty,\, p = 4,\, j = 2,\quad \text{for 2D.}
    \end{align*} 
    With these choices, the spatial error due to the ehoLOD-method remains small. The results are shown in Figure~\ref{fig:4.3}~(right). In both cases, the temporal convergence rate is exactly 5, showing that the method converges optimally in time. Furthermore, the method remains stable for all chosen time-step sizes, confirming the L-stability of \texttt{Rodas5P}. 
    Note that other choices of the spatial discretization parameters work as well, but we might see a stagnation due to the spatial errors at some point.
\end{example}

\begin{figure}
\begin{subfigure}{0.49\textwidth}
    \includegraphics[width=\textwidth]{optimal-ell-1d}
\end{subfigure}%
\begin{subfigure}{0.49\textwidth}
    \includegraphics[width=\textwidth]{temporal-convergence}
\end{subfigure}
\caption{Optimal choice of the localization parameter $\ell$ for $p=2,3,4$ and optimal $j$ in Example~\ref{example:2} (left) and the energy errors for Examples~\ref{example:1}~and~\ref{example:2} showing the temporal convergence of the \texttt{Rodas5P} scheme (right).}
\label{fig:4.3}
\end{figure}

\section{Conclusion}\label{sec:5}
In this paper, we have presented an overview of higher-order LOD methods for elliptic partial differential equations with strongly heterogeneous spatial coefficients. Moreover, using the wave equation as a model problem, we have generalized the enriched higher-order LOD framework, previously developed for parabolic problems, to the hyperbolic setting. We have established optimal convergence rates in space under minimal assumptions on the coefficient and standard compatibility and well-preparedness conditions on the data.  Finally, we have carried out numerical experiments and obtained very good agreement with the theoretical results, confirming both the accuracy and robustness of the proposed approach.

\section*{Acknowledgments}

F.~Krumbiegel and R.~Maier acknowledge funding from the Deut\-sche Forschungsgemeinschaft (DFG, German Research Foundation) -- Project-ID 258734477 -- SFB 1173. 
Parts of this work were conducted during F.~Krumbiegel's and R.~Maier's stay at the
Hausdorff Research Institute for Mathematics funded by the Deutsche Forschungsgemeinschaft (DFG, German Research Foundation) under Germany's Excellence Strategy – EXC-2047/2 – 390685813. B.~Kalyanaraman was funded by the Kempe foundation in Sweden with Project-ID JCK22-0012. Part of the computation was carried out in Project hpc2n2024-109  provided by the National Academic Infrastructure for Supercomputing in Sweden (NAISS), partially funded by the Swedish Research Council through grant agreement no.~2022-06725.


\end{document}